\newcommand{\be}{\beta}
\newcommand{\De}{\Delta}
\newcommand{\ga}{\gamma}
\newcommand{\e}{\varepsilon}
\newcommand{\Om}{\Omega}
\newcommand{\om}{\omega}
\newcommand{\BR}{\mathbb{R}}
\newcommand{\BN}{\mathbb{N}}
\newcommand{\A}{\mathcal{A}}
\newcommand{\T}{\mathcal T}
\newcommand{\R}{\mathfrak{R}}
\renewcommand{\L}{\mathcal{L}}
\renewcommand{\limsup}{\varlimsup}
\renewcommand{\liminf}{\varliminf}
\newcommand{\F}{\mathcal F}
\newcommand{\E}{\mathcal E}
\newcommand{\N}{\mathcal N}
\newtheorem{thm}{Theorem}[section]
\newtheorem{lem}[thm]{Lemma}
\newtheorem{pro}[thm]{Proposition}
\newtheorem{cor}[thm]{Corollary}
\newtheorem{ex}[thm]{Example}
\newtheorem{re}[thm]{Remark}
\def\R {\mathbb R}
\def\N {\mathbb N}
\def\cal{\mathcal}
\def\A{{\mathcal A}}
\def\L{{\mathcal L}}
\def\F{{\mathcal F}}
\begin{document}
\title[Growth rate for beta-expansions]
{Growth rate for beta-expansions}
\author{De-Jun Feng}
\address{
Department of Mathematics,
The Chinese University of Hong Kong,
Shatin,  Hong Kong, P. R. China}
\email{djfeng@math.cuhk.edu.hk}
\author{Nikita Sidorov}
\address{
School of Mathematics,
The University of Manchester,
Oxford Road,
Manchester M13 9PL,
United Kingdom}
\email{sidorov@manchester.ac.uk}
\date{\today}

\newpage
\begin{abstract}
Let $\be>1$ and let $m>\be$ be an integer. Each $x\in I_\be:=[0,\frac{m-1}{\be-1}]$ can be represented in the form
\[
x=\sum_{k=1}^\infty \e_k\be^{-k},
\]
where $\e_k\in\{0,1,\dots,m-1\}$ for all $k$ (a $\be$-expansion of $x$). It is known that a.e. $x\in I_\be$ has a continuum of distinct $\be$-expansions. In this paper we prove that if $\be$ is a Pisot number, then for a.e.~$x$ this continuum has one and the same growth rate. We also link this rate to the Lebesgue-generic local dimension for the Bernoulli convolution parametrized by $\be$.

When $\be<\frac{1+\sqrt5}2$, we show that the set of $\be$-expansions grows exponentially for every internal $x$.

\end{abstract}

\subjclass[2000]{11A63; 28D05; 42A85}
\keywords{Beta-expansion, Bernoulli convolution, Pisot number, matrix product, local dimension}
\maketitle

\setcounter{section}{0}

\section{Introduction}
\setcounter{equation}{0}

Let $\be>1$ and let $m>\be$ be an integer. Put $I_\be=[0,(m-1)/(\be-1)]$. As is well known, each $x\in I_\be$ can be represented as a {\em $\beta$-expansion}
\[
x=\sum_{n=1}^\infty \e_n\be^{-n},\quad \e_n\in\{0,1,\dots,m-1\}.
\]
Since we do not impose any extra restrictions on the ``digits'' $\e_n$, one might expect a typical $x$ to have multiple $\be$-expansions. Indeed, it was shown that a.e. $x\in I_\be$ has $2^{\aleph_0}$ such expansions -- see \cite{S, DdV, S-nonl}.

The main purpose of this paper is to study the rate of growth of the set of $\be$-expansions for a generic $x$ when $\be$ is a Pisot number (see below). We also show that if $\be$ is smaller than the golden ratio, then every $x$, except the endpoints, has a continuum of $\be$-expansions with an exponential growth.

Now we are ready to state main results of this paper. Put
\begin{align*}
\E_n(x;\be)=\Bigl\{&(\e_1,\dots, \e_n)\in\{0,1,\dots,m-1\}^n \mid \exists
(\e_{n+1},\e_{n+2},\dots)\in \{0,1,\dots,m-1\}^\BN:\\ &x=\sum_{k=1}^\infty
\e_k\be^{-k} \Bigr\}
\end{align*}
and
\[
\mathcal N_n(x;\be)=\#\E_n(x;\be).
\]
(We will write simply $\mathcal N_n(x)$ if it is clear what $\be$ is under consideration.) In other words, $\mathcal N_n(x)$ counts the number of words of length~$n$ in the alphabet $\{0,1,\dots,m-1\}$ which can serve as prefixes of $\be$-expansions of $x$. We will be interested in the rate of growth of the function $x\mapsto\mathcal N_n(x)$.

Let $\be>1$ be a {\em Pisot number} (an algebraic integer whose conjugates are less than~1 in modulus). Our central result is the following

\begin{thm}
\label{thm-1.1}
There exists a constant $\gamma=\gamma(\be,m)>0$ such that
\begin{equation}
\label{e-1.2}
\lim_{n\to \infty} \frac{\log \mathcal N_n(x;\be)}{n}=\gamma\quad \mbox{ for $\L$-a.e. }\! x\in I_\be,
\end{equation}
where $\L$ denotes the Lebesgue measure.
\end{thm}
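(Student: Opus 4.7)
The first step is to rewrite $\N_n(x;\be)$ as the mass of a discrete convolution on a small window. A prefix $(\e_1,\dots,\e_n)$ belongs to $\E_n(x;\be)$ precisely when $\sum_{k=1}^n\e_k\be^{-k}\in[x-(m-1)\be^{-n}/(\be-1),\,x]$, so if $\mu_n$ denotes the distribution of $\sum_{k=1}^n\xi_k\be^{-k}$ with $\xi_k$ i.i.d.\ uniform on $\{0,\dots,m-1\}$, one has $\N_n(x;\be)=m^n\mu_n(J_{n,x})$ with $J_{n,x}$ an interval of length $(m-1)\be^{-n}/(\be-1)$ ending at $x$. Theorem~\ref{thm-1.1} is then equivalent to the existence of an $\L$-a.e.\ constant local scaling exponent $\al$ satisfying $\lim_n n^{-1}\log\mu_n(J_{n,x})=-\al\log\be$, with $\ga=\log m-\al\log\be$; here $\al$ is the Lebesgue-generic local dimension of the Bernoulli convolution $\nu_\be$ referred to in the abstract, since $\mu_n$ discretises $\nu_\be$ at scale $\be^{-n}$.

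The Pisot hypothesis enters through a matrix-cocycle representation of $\mu_n(J_{n,x})$. Because $\be$ is Pisot, the Galois conjugates of any algebraic integer of the form $\sum_{k=1}^n(\e_k-\e'_k)\be^{n-k}\in\BZ[\be]$ stay uniformly bounded whenever the original combination does. As a consequence, the overlaps of the $n$-th level cylinders of the IFS $\{x\mapsto(x+i)/\be\}_{i=0}^{m-1}$ fall, after rescaling, into a finite set of \emph{types}---the standard setup underlying the multifractal analysis of self-similar measures with Pisot contraction ratio. The outcome is a finite family $\{M_\om\}_{\om\in\Om}$ of non-negative integer matrices together with a symbolic coding $x\mapsto(\om_k(x))_{k\ge1}$ under a piecewise linear expanding map $\tau:I_\be\to I_\be$ that preserves an ergodic probability measure $\eta$ equivalent to $\L$, and such that $\N_n(x;\be)$ is comparable, up to uniform multiplicative constants, to a designated row sum of the product $M_{\om_1(x)}\cdots M_{\om_n(x)}$.

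With this cocycle in place, Kingman's subadditive ergodic theorem applied to $\log\|M_{\om_1(x)}\cdots M_{\om_n(x)}\|$ on $(I_\be,\eta,\tau)$ produces an $\L$-a.e.\ constant limit $\la$, which the comparability step identifies with $\ga$. The main obstacle is the strict positivity $\ga>0$, since Kingman on its own allows $\la=0$. My plan is to exhibit a $\tau$-cylinder $[\om_1,\dots,\om_p]$ of positive $\eta$-measure on which $M_{\om_1}\cdots M_{\om_p}$ is strictly positive (primitive); ergodicity then forces $\eta$-a.e.\ orbit to visit this cylinder along a set of times of positive density, and a Perron--Frobenius estimate for the primitive block delivers an $x$-independent lower bound $\|M_{\om_1(x)}\cdots M_{\om_n(x)}\|\ge c\rho^n$ with $\rho>1$, giving $\ga\ge\log\rho>0$. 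The existence of such a primitive block is consistent with, and is to be extracted from, the cited fact that $\L$-a.e.\ $x$ has a continuum of $\be$-expansions \cite{S,DdV,S-nonl}: the branching that supplies the continuum must, in the cocycle picture, manifest as a strictly positive matrix entry somewhere along a generic orbit.
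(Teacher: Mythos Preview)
Your overall strategy---encode $\N_n$ as a matrix cocycle over a finite-type symbolic system via the Pisot property, then apply Kingman's subadditive ergodic theorem---is the same as the paper's. The connection to the local dimension of the Bernoulli convolution that you sketch in your first paragraph is also made in the paper, but as a corollary rather than as part of the proof.

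Two points deserve comment. First, you assert that the coding yields an expanding map $\tau$ with an ergodic invariant measure equivalent to $\L$ on all of $I_\be$. The paper is more careful here: the full symbolic system need not be irreducible, so one passes to an irreducible component $\widehat\Omega\subset\Omega$, constructs the Markov (Parry) measure there, checks that its projection is equivalent to Lebesgue on a \emph{sub}interval, obtains the a.e.\ limit on that subinterval via Kingman, and then propagates the conclusion to all of $I_\be$ by a density argument using the net structure. Your sketch skips this reduction-and-extension step.

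Second, your route to $\ga>0$ differs from the paper's and has a real gap. Exhibiting a strictly positive block $B=M_{\om_1}\cdots M_{\om_p}$ does not by itself give Perron root $\rho>1$: you still need either that $B$ is $d\times d$ with $d\ge2$ and integer entries (whence $\rho\ge d$), or that $B$ is a scalar $\ge2$. Neither is a consequence of the bare fact that $\L$-a.e.\ $x$ has a continuum of expansions, since a continuum can occur with subexponential growth of $\N_n$ (the paper exhibits exactly this phenomenon for the golden ratio). The paper instead proves $\ga>0$ by an entirely separate device: the random $\be$-transformation $K_\be$ of Dajani--de~Vries. Garsia's separation lemma (pigeonhole on $O(\be^n)$ values among $m^n$ words) supplies two distinct blocks $a_1\cdots a_k$ and $b_1\cdots b_k$ with equal $\be$-value; ergodicity of $K_\be$ with respect to $\mathbb P\otimes m_\be$ (with $m_\be\sim\L$) gives a positive limiting frequency $\widetilde\ga$ for the block $a_1\cdots a_k$ in a generic expansion; and since each occurrence can be independently swapped for $b_1\cdots b_k$, one obtains $\ga\ge\widetilde\ga\log2>0$. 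This block-substitution argument is more direct than trying to locate a primitive matrix with large spectral radius inside the cocycle.
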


Let $\mu=\mu_{\beta,m}$ denote the probability measure on $\R$ defined as follows:
\[
\mu(E)=\mathbb P\left\{(\e_1,\e_2,\dots)\in\{0,1,\dots,m-1\}^\BN : \sum_{k=1}^\infty \e_k\be^{-k}\in E\right\},
\]
where $\mathbb P=\prod_1^\infty\{1/m,\dots,1/m\}$, and $E$ is an arbitrary Borel subset of $\BR$.

Recall that a Borel probability measure $\nu$ on $\R$ is called {\em self-similar} if $\nu=\sum_{i=1}^r p_i\,\nu\circ T_i^{-1}$, where $T_1,\dots,T_r$ are linear contractions on $\BR$, $p_i\ge0$ with $\sum_{i=1}^r p_i=1$.
The measure $\mu$ is known to be a self-similar measure supported on $I_\be$ with $r=m$, $T_ix=(x+i)/\be$ ($i=0,1,\dots,m-1$) and $p_i\equiv1/m$ (\cite{Hut81}). When $m=2$,  $\mu$ is the so-called {\it Bernoulli convolution associated with $\beta$} -- see, e.g., \cite{Sol}. For $x\in I_\be$, the {\it local dimension} of $\mu$ at $x$ is defined by
       \begin{equation}\label{e-local}
             d(\mu,x)= \lim_{r\to 0}\frac{\log \mu([x-r,x+r])}{\log r},
              \end{equation}
provided that the limit exists. As an application of Theorem~\ref{thm-1.1}, we obtain

\begin{cor}
\label{cor-1.2}
For $\L$-a.e.\! $x\in I_\be$,
$d(\mu_{\be,m},x)\equiv({\log m -\gamma})/{\log \beta}$.
\end{cor}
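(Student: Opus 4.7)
The strategy is to sandwich $\mu(B(x,r))$ between expressions in $\mathcal N_n(x;\beta)$ via the self-similarity of $\mu$, and then invoke Theorem~\ref{thm-1.1}. Iterating $\mu = m^{-1}\sum_{i=0}^{m-1}\mu\circ T_i^{-1}$ gives
\[
\mu(B(x,r)) \;=\; m^{-n}\!\!\sum_{(\varepsilon_1,\ldots,\varepsilon_n)\in\{0,\ldots,m-1\}^n}\!\!\mu\bigl(B\bigl(T_{\varepsilon_1}^{-1}\cdots T_{\varepsilon_n}^{-1}x,\; r\beta^n\bigr)\bigr).
\]
Set $r_n := A\beta^{-n}$ with $A := (m-1)/(\beta-1)$, the length of $I_\beta$, so that $r_n\beta^n = A$. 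If $(\varepsilon_1,\ldots,\varepsilon_n)\in\E_n(x;\beta)$, the rescaled point $T_{\varepsilon_1}^{-1}\cdots T_{\varepsilon_n}^{-1}x$ lies in $I_\beta$; the radius-$A$ ball about it therefore covers $I_\beta$ and carries $\mu$-mass $1$. Keeping only these terms yields $\mu(B(x,r_n))\ge m^{-n}\mathcal N_n(x;\beta)$. For the opposite direction, bound every summand by $1$: the nonzero summands correspond to partial sums $\sum\varepsilon_k\beta^{-k}\in[x-2r_n,x+r_n]$, and splitting this length-$3r_n$ window into three translates of the defining interval of $\mathcal N_n$ gives
\[
\mu(B(x,r_n)) \;\le\; m^{-n}\bigl(\mathcal N_n(x-r_n;\beta)+\mathcal N_n(x;\beta)+\mathcal N_n(x+r_n;\beta)\bigr).
\]

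Next, take $\log$, divide by $\log r_n = -n\log\beta + O(1)$, and invoke Theorem~\ref{thm-1.1}. The lower bound immediately gives $\limsup_n \log\mu(B(x,r_n))/\log r_n \le (\log m-\gamma)/\log\beta$ for $\mathcal L$-a.e.\ $x$. The matching $\liminf$ requires the additional \emph{a.e.}\ statement $\log\mathcal N_n(x\pm r_n;\beta)/n\to\gamma$. Once both hold, the local dimension along the geometric sequence $r_n$ equals $(\log m-\gamma)/\log\beta$; monotonicity of $r\mapsto\mu(B(x,r))$ and $\log r_n/\log r\to 1$ for $r\in[r_{n+1},r_n]$ then upgrade the conclusion to the full limit as $r\to0$ in \eqref{e-local}.

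The delicate point is the pointwise control of $\mathcal N_n$ at the shifted locations $x\pm r_n$, which are \emph{not} quantified over in Theorem~\ref{thm-1.1}. A Borel--Cantelli attempt based on Markov's inequality is too weak: the identity $\int_{I_\beta}\mathcal N_n\,d\mathcal L = A(m/\beta)^n$ (sum of cylinder lengths) only gives $\mathcal L\{\mathcal N_n > e^{n(\gamma+\delta)}\}\le A e^{n(\log(m/\beta)-\gamma-\delta)}$, which is summable in $n$ only in the extreme case $\gamma = \log(m/\beta)$, generally a strict upper bound. I expect the obstacle to be removed using the Pisot/matrix-product machinery underlying Theorem~\ref{thm-1.1}: $\mathcal N_n(x;\beta)$ is naturally expressible as a norm of a product of bounded nonnegative matrices whose entries are determined by the position of $x$ on scale $\beta^{-n}$, and a shift of $x$ by $O(\beta^{-n})$ perturbs only the last factor in this product. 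Consequently the ratio $\mathcal N_n(x\pm r_n;\beta)/\mathcal N_n(x;\beta)$ stays uniformly bounded in $n$, so the Lyapunov-type rate $\gamma$ is shared at the shifted points and the two bounds above collapse onto $(\log m-\gamma)/\log\beta$.
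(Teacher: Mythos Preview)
Your lower bound $\mu(B(x,r_n))\ge m^{-n}\mathcal N_n(x;\beta)$ is fine and matches the paper. The gap is in the upper bound, precisely where you flag it: the heuristic ``a shift of $x$ by $O(\beta^{-n})$ perturbs only the last factor'' is not correct. The coding $x_1\cdots x_{n+1}$ records which level-$k$ net interval contains $x$ for each $k\le n$; a shift by $r_n=A\beta^{-n}$ changes only the last symbol provided $x$ and $x\pm r_n$ lie in the same level-$(n-1)$ net interval, i.e.\ provided $x$ is at distance $>r_n$ from every point of $P_{n-1}$. But by Garsia's separation lemma $\#P_{n-1}\asymp\beta^{n}$, with typical spacing $\asymp\beta^{-n}$, so for a fixed positive proportion of $x$ the shift by $r_n$ \emph{does} cross a level-$(n-1)$ boundary (and potentially boundaries at many earlier levels). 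There is no uniform bound on the ratio $\mathcal N_n(x\pm r_n)/\mathcal N_n(x)$ along these lines, and no a.e.\ Borel--Cantelli statement follows either, since the relevant bad set has nonsummable measure.

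The paper avoids this entirely by shrinking the inner radius. Instead of comparing $\mu(B(x,r_n))$ with $\mathcal N_n$ at shifted points, it shows
\[
\mu\!\left(x-\tfrac{\beta^{-n}}{n^2},\,x+\tfrac{\beta^{-n}}{n^2}\right)\le m^{-n}\mathcal N_n(x;\beta)
\]
for $\mathcal L$-a.e.\ $x$ and all large $n$. The point is that the enlarged prefix set $\E'_n(x;\beta)$ obtained by widening the defining window of $\E_n$ by $\beta^{-n}/n^2$ on each side can differ from $\E_n(x;\beta)$ only when some partial sum $\sum_{k\le n}\varepsilon_k\beta^{-k}$ falls in one of two intervals of length $\beta^{-n}/n^2$; by Garsia there are $O(\beta^n)$ such partial sums, so $\mathcal L\{x:\E'_n\neq\E_n\}=O(n^{-2})$ is summable and Borel--Cantelli applies. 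Since $\log(\beta^{-n}/n^2)/\log r_n\to1$, this inner ball still detects the correct local dimension. Your Borel--Cantelli attempt failed because you aimed it at $\{\mathcal N_n>e^{n(\gamma+\delta)}\}$ rather than at $\{\E'_n\neq\E_n\}$; the latter is the set that Garsia makes small.
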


\begin{thm}
\label{thm-1.2}
If $\beta$ is an integer such that  $\beta$ divides $m$, then $\gamma=\log(m/\beta)$. Otherwise we have
$\gamma<\log(m/\beta)$.
\end{thm}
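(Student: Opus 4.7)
The plan is to treat the two cases separately and to convert the question into information about the local dimension of $\mu=\mu_{\beta,m}$ via Corollary~\ref{cor-1.2}, which asserts that for Lebesgue-a.e.\ $x$ the local dimension equals $d^\star := (\log m - \gamma)/\log\beta$. Thus $\gamma = \log(m/\beta)$ is equivalent to $d^\star = 1$, and $\gamma<\log(m/\beta)$ to $d^\star > 1$.

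For the case $\beta\in\BZ$ and $\beta\mid m$, write $m = k\beta$ and split every digit uniquely as $\e = a + \beta b$ with $a\in\{0,\dots,\beta-1\}$ and $b\in\{0,\dots,k-1\}$. Under the uniform distribution on digits, the sequences $(a_n)$ and $(b_n)$ become independent i.i.d.\ uniform, and the identity $\sum_{n\ge 1}\e_n\beta^{-n} = \sum_{n\ge 1}a_n\beta^{-n} + \beta\sum_{n\ge 1}b_n\beta^{-n}$ exhibits $\mu$ as the convolution of Lebesgue measure on $[0,1]$ (arising from the standard base-$\beta$ uniform expansion) with the $\beta$-dilation of $\mu_{\beta,k}$. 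This convolution has density $f(x) = (\beta\cdot\mu_{\beta,k})([x-1,x])$, strictly positive on the interior of $I_\beta$; hence $d(\mu,x)=1$ for Lebesgue-a.e.\ $x$ and Corollary~\ref{cor-1.2} gives $\gamma=\log(m/\beta)$.

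For the Pisot case with $\beta$ not an integer dividing $m$, the first step is to show $\mu\not\ll\mathcal{L}$ via Erd\H{o}s' Fourier-analytic argument. Self-similarity yields $\hat\mu(t)=\prod_{n\ge 1}\phi_m(t\beta^{-n})$ where $\phi_m(s)=m^{-1}\sum_{j=0}^{m-1}e^{2\pi ijs}$. Evaluating at $t=\beta^N$ telescopes to $\hat\mu(\beta^N)=\prod_{j=0}^{N-1}\phi_m(\beta^j) \cdot \prod_{j\ge 1}\phi_m(\beta^{-j})$. Since $\phi_m$ is $1$-periodic and smooth with $\phi_m(0)=1$, and the Pisot property forces the distance from $\beta^j$ to $\BZ$ to decay exponentially while $\beta^{-j}\to 0$, both products converge. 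The hypothesis on $\beta$ is exactly what prevents any factor from vanishing: the zeros of $\phi_m$ lie in $(1/m)\BZ\setminus\BZ$, and either (i) $\beta$ is non-integer Pisot, in which case $\beta^{\pm j}$ is irrational for $j\ge 1$, or (ii) $\beta$ is a Pisot integer with $\beta\nmid m$, in which case $\beta^j\in\BZ$ and $m/\beta^j\notin\BZ$ for $j\ge 1$. In either case no $\beta^{\pm j}$ belongs to the zero set, so $\hat\mu(\beta^N)\to c\ne 0$; Riemann--Lebesgue then rules out $\mu\ll\mathcal{L}$.

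The main obstacle is converting singularity of $\mu$ into the \emph{strict} inequality $d^\star>1$, since $\mu\perp\mathcal{L}$ only yields $\mu([x-r,x+r])=o(r)$ Lebesgue-a.e., hence $d^\star\ge 1$, but not strictly. The plan is to exploit the self-similar structure together with the uniform separation $\ge c\beta^{-n}$ of the atoms of the level-$n$ empirical measure $\mu_n$ (Garsia's lemma, valid for Pisot $\beta$). If $d^\star=1$ held, then the comparison $\mathcal N_n(x)\sim m^n\mu([x-\beta^{-n}|I_\beta|,x])$ used in Corollary~\ref{cor-1.2} would force $\mu([x-r,x+r])/r$ to stay bounded away from zero at Lebesgue-density points; a Vitali-covering argument over the IFS cylinders would then give a nontrivial absolutely continuous component for $\mu$, and the $0/1$ dichotomy for self-similar measures would upgrade this to $\mu\ll\mathcal{L}$, contradicting the Fourier step. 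Carrying out this bootstrap rigorously is the principal technical difficulty; once established, $d^\star>1$, equivalently $\gamma<\log(m/\beta)$, follows immediately from Corollary~\ref{cor-1.2}.
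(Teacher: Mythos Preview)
Your reduction via Corollary~\ref{cor-1.2} to the dichotomy ``$d^\star=1$ versus $d^\star>1$'' is the right starting point, and your treatment of the case $\beta\mid m$ is correct (and more explicit than the paper, which simply cites \cite[Proposition~5.3]{DST99} for the equivalence ``$\mu\ll\mathcal L \iff \beta\in\BZ,\ \beta\mid m$''). Your Erd\H{o}s--Fourier argument for singularity in the remaining case is also fine.

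The genuine gap is in the final step. From $d^\star=1$ you claim that $\mu([x-r,x+r])/r$ would stay bounded away from zero at Lebesgue-density points. This does not follow: local dimension $1$ only means $\log\mu(B_r(x))/\log r\to 1$, which is perfectly compatible with $\mu(B_r(x))/r\to 0$ (for instance $\mu(B_r)\asymp r/\log(1/r)$). Without that lower bound your Vitali/bootstrap argument to an absolutely continuous component cannot start, and you yourself flag this as ``the principal technical difficulty'' without resolving it. So the strict inequality $d^\star>1$ is not established.

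The paper closes this gap by a different route. It invokes the Przytycki--Urba\'nski result \cite{PrUr89} that singularity of a self-similar measure of this type forces $\dim_H\mu<1$, and then uses the $L^q$-spectrum $\tau(q)$: since $\mathcal L(E(d^\star))=1$ one has $\dim_H E(d^\star)=1$, whence by the multifractal inequality $1\le d^\star q-\tau(q)$ for all $q$. Combining $\tau(0)=-1$, $\tau(1)=0$, concavity of $\tau$, differentiability of $\tau$ on $(0,\infty)$ with $\tau'(1)=\dim_H\mu<1$ (from \cite{Fen03}), one gets $\tau'(0+)>1$ and hence $d^\star\ge\tau'(0+)>1$. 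The key extra ingredient you are missing is thus not a strengthening of ``$\mu$ singular'' to ``$\mu$ has no a.c.\ part'', but the quantitative fact $\dim_H\mu<1$, which is what turns the soft inequality $d^\star\ge 1$ into a strict one.
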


Theorem~\ref{thm-1.1}, Corollary~\ref{cor-1.2} and Theorem~\ref{thm-1.2} together yield

\begin{pro}\label{cor2}
We have $d(\mu_{\be,m},x)\equiv D_{\be,m}$ for Lebesgue-a.e. $x\in I_\be$ with $1\le D_{\be,m}<\log_\be m$. Moreover, $D_{\be,m}>1$ unless $\be$ is an integer dividing $m$.
\end{pro}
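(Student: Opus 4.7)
The plan is that Proposition~\ref{cor2} is essentially a bookkeeping consequence of Theorem~\ref{thm-1.1}, Corollary~\ref{cor-1.2}, and Theorem~\ref{thm-1.2}; no new machinery should be required. I would simply set
\[
D_{\be,m}:=\frac{\log m-\gamma}{\log\be},
\]
with $\gamma=\gamma(\be,m)$ as in Theorem~\ref{thm-1.1}, and then verify the three announced properties by substitution.

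First, the $\L$-a.e.\ constancy of $d(\mu_{\be,m},x)$ is immediate from Corollary~\ref{cor-1.2}, which gives $d(\mu_{\be,m},x)\equiv(\log m-\gamma)/\log\be=D_{\be,m}$ for a.e.\ $x\in I_\be$. So it suffices to show that $1\le D_{\be,m}<\log_\be m$, with strict lower inequality in the non-exceptional case.

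The upper bound $D_{\be,m}<\log_\be m=\log m/\log\be$ is equivalent to $\gamma>0$, which is exactly the positivity asserted in Theorem~\ref{thm-1.1}. For the lower bound, Theorem~\ref{thm-1.2} says $\gamma\le\log(m/\be)$ in both cases (equality when $\be$ is an integer dividing $m$, strict inequality otherwise). Substituting,
\[
D_{\be,m}=\frac{\log m-\gamma}{\log\be}\ge\frac{\log m-\log(m/\be)}{\log\be}=\frac{\log\be}{\log\be}=1,
\]
with equality precisely when $\gamma=\log(m/\be)$, i.e., precisely in the exceptional case of Theorem~\ref{thm-1.2}. Hence $D_{\be,m}>1$ whenever $\be$ is not an integer divisor of $m$, completing the proof.

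There is no real obstacle here: the proposition packages the quantitative content of Theorems~\ref{thm-1.1}--\ref{thm-1.2} and Corollary~\ref{cor-1.2} into a single dimensional statement, and the only thing to check is that the explicit formula $D_{\be,m}=(\log m-\gamma)/\log\be$ respects the stated range. All the genuine work lies in establishing $\gamma$ itself and the sharp inequality $\gamma<\log(m/\be)$ off the exceptional locus.
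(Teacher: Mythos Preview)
Your proposal is correct and is exactly the argument the paper has in mind: the proposition is stated there as an immediate consequence of Theorem~\ref{thm-1.1}, Corollary~\ref{cor-1.2}, and Theorem~\ref{thm-1.2}, with no additional proof given. You have simply spelled out the substitution $D_{\be,m}=(\log m-\gamma)/\log\be$ and checked the two inequalities, which is precisely what ``together yield'' means here.
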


In addition to the above results for Pisot $\be$, we also obtain a general result for all small $\be$ which holds for all internal $x$. Recall that if $\be\in\bigl(1, \frac{1+\sqrt{5}}{2}\bigr)$ and $m=2$, then any $x\in \bigl(0,\frac{1}{\beta-1}\bigr)$ has a continuum of distinct $\be$-expansions (see \cite[Theorem~3]{EJK}). We prove a quantitative version of this claim for an arbitrary $m\ge2$:

\begin{thm}\label{thm-1.5}
Let $\beta$ be an arbitrary number in $\bigl(1, \frac{1+\sqrt{5}}{2}\bigr)$. Then there exists $\kappa=\kappa(\be)>0$ such that
\begin{equation}
\label{e-1.3}
\liminf_{n\to \infty} \frac{\log_2\mathcal N_n(x;\be)}{n}\geq \kappa\quad \mbox{ for any } x\in \Bigl(0,\frac{m-1}{\beta-1}\Bigr).
\end{equation}
\end{thm}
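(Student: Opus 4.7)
I will embed a full binary tree of depth $\lfloor(n-n_0(x))/N\rfloor$ into the tree of $\be$-expansions of any internal $x\in(0,b)$, where $b:=(m-1)/(\be-1)$, $N=N(\be)$ is a constant, and $n_0(x)<\infty$. This yields $\mathcal N_n(x;\be)\geq 2^{\lfloor(n-n_0(x))/N\rfloor}$ and hence $\kappa(\be)\geq 1/N$.

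For $y\in I_\be$, let $A(y):=\{\e\in\{0,\ldots,m-1\}:\be y-\e\in I_\be\}=\{0,\ldots,m-1\}\cap[\be y-b,\be y]$ be the set of admissible next digits, and set $S:=\{y:|A(y)|\geq 2\}$ (the \emph{switch set}). Since $\be<(1+\sqrt 5)/2$ yields $b>1$, every $y$ admits at least one continuation and $S$ is a non-empty open subset of $I_\be$. Explicitly, for $m=2$ one verifies $S=[1/\be,\,1/(\be(\be-1))]$, non-degenerate precisely because $\be(\be-1)<1$, equivalent to $\be<(1+\sqrt 5)/2$; for $m\geq 3$ the switch set is larger still.

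The central step is to construct a compact \emph{recurrent branching set} $E\subset S$ and an integer $N=N(\be)$ such that from every $y\in E$ there exist two distinct words $w,w'\in\{0,\ldots,m-1\}^N$, $w\ne w'$, both valid digit prefixes from $y$, with the depth-$N$ states $\be^N y-\sum_{k=1}^N w_k\be^{N-k}$ and $\be^N y-\sum_{k=1}^N w'_k\be^{N-k}$ both lying in $E$. My plan is: at $y\in E\subset S$ use the branching to form two children $y',y''\in I_\be$, then follow each by forced iterations until re-entering $S$. Since the forced maps $y\mapsto\be y$ and $y\mapsto\be y-(m-1)$ are $\be$-expanding with their only repelling fixed points at $0$ and $b$, the first return time to $S$ is uniformly bounded on any compact subset of $I_\be$ disjoint from $\{0,b\}$. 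Restricting $E$ to the interior of $S$ ensures that $y',y''$ stay away from $\{0,b\}$, yielding a uniform $N$. A further iterated-preimage construction $E_{k+1}:=E_k\cap\Phi_0^{-1}(E_k)\cap\Phi_1^{-1}(E_k)$, where $\Phi_0,\Phi_1$ denote the two length-$N$ descendant maps, produces a nonempty $E:=\bigcap_k E_k$ such that both descendants land in $E$.

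Finally, any internal $x\in(0,b)$ reaches $E$ along some branch in $n_0(x)<\infty$ steps: since $x>0$, iterating the forced map $y\mapsto\be y$ (valid while $y<1/\be$) gives geometrically growing states that must leave any neighbourhood of $0$ and enter $S$, and from $S$ a finite number of further steps brings some branch into $E$. Iterating the recurrent branching property then gives $\mathcal N_{n_0(x)+kN}(x)\geq 2^k$, so
\[
\liminf_{n\to\infty}\frac{\log_2\mathcal N_n(x;\be)}{n}\geq\frac1N=:\kappa(\be).
\]
The main obstacle is the construction of $E$: a switch state near $\partial S$ has one admissible digit sending the next state arbitrarily close to $\{0,b\}$, where forced return to $S$ takes arbitrarily many steps, so $E$ must lie strictly inside $S$. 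The more subtle issue is ensuring \emph{both} length-$N$ descendants return to the \emph{same} compact set $E$ (rather than just to $S$), which amounts to a fixed-point problem for the pair $(\Phi_0,\Phi_1)$ and is the heart of the argument.
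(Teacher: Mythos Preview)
Your overall strategy---embed a binary tree by repeatedly branching at switch points and returning to the switch region---is the same as the paper's, and the reduction to $m=2$ and the final step (any internal $x$ reaches the switch region in finitely many steps) are fine. But the central step, the construction of the recurrent set $E$, has a real gap.

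First, the sentence ``Restricting $E$ to the interior of $S$ ensures that $y',y''$ stay away from $\{0,b\}$'' is not correct as stated: for $m=2$, if $y\in S$ is just above $1/\be$, then $R_\be(y)=\be y-1$ is just above $0$, so interior points of $S$ can still have a child arbitrarily close to an endpoint. You therefore need $E$ to be a compact subset strictly inside $S$, and then the real difficulty is exactly what you identify at the end: showing that both length-$N$ descendants of every $y\in E$ land back in the \emph{same} $E$. Your nested-intersection scheme $E_{k+1}=E_k\cap\Phi_0^{-1}(E_k)\cap\Phi_1^{-1}(E_k)$ gives no reason why $\bigcap_k E_k\neq\emptyset$; indeed the maps $\Phi_0,\Phi_1$ are expanding, so preimages shrink, and without an explicit computation there is no a priori bound preventing the intersection from collapsing. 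You call this ``the heart of the argument'' but supply no argument for it, so the proof is incomplete at exactly the point where the work lies.

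The paper sidesteps this fixed-point problem entirely by \emph{not} insisting that both children be good. It works directly with the full switch interval $\Delta_\be=[1/\be,1/(\be(\be-1))]$ and distinguishes two cases. In Case~1, $y$ lies in a central subinterval of $\Delta_\be$ and both children $L_\be(y),R_\be(y)$ are at distance at least an explicit $\delta>0$ from $\{0,b\}$, so both return to $\Delta_\be$ within a bounded number of steps. In Case~2, $y$ is near $\partial\Delta_\be$ and one child may be near an endpoint; the paper simply \emph{discards} that branch, but then checks by a short explicit calculation (using $\be<\frac{1+\sqrt5}2$, equivalently $\be(\be-1)<1$) that the surviving child lies again in $\Delta_\be$, and that after one further step both grandchildren are at distance $\ge\delta$ from the endpoints. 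Thus one never loses more than one bifurcation out of every two levels, which costs only a factor of $2$ in $\kappa$ and yields the explicit value
\[
\kappa=\tfrac12\bigl(\lfloor\log_\be(1/\delta)\rfloor+1\bigr)^{-1},\qquad \delta=\min\Bigl(\tfrac{1+\be-\be^2}{\be^2-1},\,\be-1\Bigr).
\]
This ``discard and rebifurcate'' trick replaces your abstract fixed-point problem with a two-line computation, and is what you are missing.
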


\begin{cor}\label{cor-1.5}
For any $\be\in\bigl(1, \frac{1+\sqrt{5}}{2}\bigr)$ and $m=2$, we have
\[
\overline d(\mu,x)\le(1-\kappa)\log_\beta 2
\]
for all $x\in\bigl(0,\frac{1}{\beta-1}\bigr)$, where
\[
\overline d(\mu,x)= \limsup_{r\to 0}\frac{\log \mu([x-r,x+r])}{\log r}.
\]
\end{cor}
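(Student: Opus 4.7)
\textbf{Proof plan for Corollary \ref{cor-1.5}.}
The plan is to convert the combinatorial lower bound on $\mathcal N_n(x;\be)$ supplied by Theorem~\ref{thm-1.5} into a mass lower bound for $\mu$ on small intervals around $x$. Write $\pi : \{0,1\}^\BN \to I_\be$ for the coding map $\pi(\e_1,\e_2,\dots)=\sum_{k=1}^\infty \e_k\be^{-k}$, so that $\mu=\mathbb P\circ\pi^{-1}$ with $\mathbb P=\prod_1^\infty\{1/2,1/2\}$. For any admissible prefix $(\e_1,\dots,\e_n)\in\E_n(x;\be)$ and any continuation $(\delta_{n+1},\delta_{n+2},\dots)\in\{0,1\}^\BN$, the digit bound gives
\[
\Bigl|\pi(\e_1,\dots,\e_n,\delta_{n+1},\delta_{n+2},\dots)-x\Bigr|=\Bigl|\sum_{k=n+1}^\infty(\delta_k-\e_k)\be^{-k}\Bigr|\le r_n,\qquad r_n:=\frac{\be^{-n}}{\be-1}.
\]
Hence $\pi$ sends the cylinder determined by $(\e_1,\dots,\e_n)$ into $[x-r_n,x+r_n]$.

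The cylinders corresponding to distinct prefixes in $\E_n(x;\be)$ are pairwise disjoint in $\{0,1\}^\BN$ and each carries $\mathbb P$-mass $2^{-n}$, so
\[
\mu\bigl([x-r_n,x+r_n]\bigr)\ \ge\ 2^{-n}\,\mathcal N_n(x;\be).
\]
Fix $x\in(0,\tfrac{1}{\be-1})$ and $\e>0$. By Theorem~\ref{thm-1.5}, there is $N$ such that $\mathcal N_n(x;\be)\ge 2^{(\kappa-\e)n}$ for every $n\ge N$, which yields
\[
\mu\bigl([x-r_n,x+r_n]\bigr)\ \ge\ 2^{-(1-\kappa+\e)n}\qquad(n\ge N).
\]

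To pass from the sequence $\{r_n\}$ to arbitrary $r\to 0$, given small $r>0$ pick $n$ with $r_{n+1}<r\le r_n$; monotonicity gives $\mu([x-r,x+r])\ge\mu([x-r_{n+1},x+r_{n+1}])\ge 2^{-(1-\kappa+\e)(n+1)}$, and both $\log\mu(\cdot)$ and $\log r$ are negative for $n$ large. Since $\log r>\log r_{n+1}=-(n+1)\log\be-\log(\be-1)$, a direct estimate gives
\[
\frac{\log\mu([x-r,x+r])}{\log r}\ \le\ \frac{(1-\kappa+\e)(n+1)\log 2}{(n+1)\log\be+\log(\be-1)}\ \xrightarrow[n\to\infty]{}\ (1-\kappa+\e)\log_\be 2.
\]
Taking $\limsup_{r\to 0}$ and then letting $\e\downarrow 0$ yields $\overline d(\mu,x)\le(1-\kappa)\log_\be 2$, as required.

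The argument is essentially routine once Theorem~\ref{thm-1.5} is in hand; the only thing to watch is the direction of the inequality when dividing by a negative $\log r$, and the geometric spacing of $\{r_n\}$, which makes the interpolation between consecutive radii automatic (the ratio $r_{n+1}/r_n=1/\be$ is bounded away from $0$), so no losses beyond the arbitrary $\e$ arise.
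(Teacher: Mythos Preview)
Your proof is correct and follows essentially the same route as the paper: derive the mass bound $\mu([x-r_n,x+r_n])\ge 2^{-n}\mathcal N_n(x;\be)$ (the paper's inequality~(\ref{eq:ineq1})), feed in the growth bound from Theorem~\ref{thm-1.5}, and read off the upper local dimension. The paper's write-up is terser---it computes the $\limsup$ directly along the geometric sequence $r_n=\be^{-n}/(\be-1)$ without spelling out the interpolation to arbitrary $r$ or the $\e$-slack---but your added care on both points is warranted, since Theorem~\ref{thm-1.5} gives only a $\liminf$ statement rather than the uniform bound~(\ref{eq:Nnl}) valid on the switch region.
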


The content of the paper is the following. In Section~2, we prove Theorem~\ref{thm-1.1} and Corollary~\ref{cor-1.2}.
Section~3 is devoted to the proof of Theorem~\ref{thm-1.2}. In Section~4, we consider an important class of examples, namely, the case when $\beta$ is a multinacci number. In Section~5, we prove Theorem~\ref{thm-1.5} and give an explicit lower bound for $\kappa$.

\section{Proof of Theorem \ref{thm-1.1} and Corollary \ref{cor-1.2}}
\setcounter{equation}{0}

First we reformulate our problem in the language of iterated function systems (IFS). Note that
\begin{equation}\label{eq:En}
\E_n(x;\be)=\left\{(\e_1,\dots, \e_n)\in\{0,1,\dots,m-1\}^n \mid 0\le
x-\sum_{k=1}^n \e_k\be^{-k} \le \frac{(m-1)\be^{-n}}{\be-1} \right\}
\end{equation}
(see, e.g., \cite{HS}). Consider now the following IFS $\Phi=\{S_i\}_{i=1}^{m}$ on $\R$:
\begin{equation}
\label{e-1.0}
S_i(x)=\rho x+ (i-1)(1-\rho)/(m-1),\qquad i=1,\ldots,m,
\end{equation}
where $\rho=1/\beta\in(0,1)$. Since $m>\be$, it is clear that $[0,1]$ is the attractor of $\Phi$ (note that $S_m(1)=1$), i.e., $[0,1]=\bigcup_{i=1}^m S_i([0,1])$.

Let $\A$ denote the alphabet $\{1,\ldots,m\}$ and  $\A_n$ the collection of all words of length $n$ over $\A$, $n\in \N$.
For $z\in I_\be$ it is clear that
\begin{equation}
\label{e-1.1}
\mathcal N_n\left(\frac{(m-1)z}{\be-1}\right)=\#\{J=j_1\cdots j_n\in \A_n:\; z\in S_{J}([0,1])\},
\end{equation}
where
$S_J:=S_{j_1}\circ S_{j_2}\circ \cdots\circ S_{j_n}$. This is because
\[
S_J(z)=\frac{1-\rho}{m-1}\sum_{k=1}^n (j_k-1)\rho^{k-1}+\rho^n z,
\]
and thus, $S_J([0,1])=\bigl[\frac{1-\rho}{m-1}\sum_{k=1}^n (j_k-1)\rho^{k-1}, \frac{1-\rho}{m-1}\sum_{k=1}^n (j_k-1)\rho^{k-1}+\rho^n\bigr]$, which is none other than a rescaled version of (\ref{eq:En}).

We sketch here the proof of Theorem~\ref{thm-1.1}:  first we  encode the interval $[0,1]$ as a cylinder in a subshift space of finite type, and show that  $\mathcal N_n(\frac{m-1}{\be-1}z)$ corresponds to the norm of a matrix product which depends on the coding of $z$ and $n$. Next, we construct an irreducible branch of the subshift in question and assign an invariant Markov measure such that its projection under the coding map is equivalent to the Lebesgue measure on a subinterval of $[0,1]$. Then by the subadditive ergodic theorem,  $\lim_{n\to \infty} \frac{\log \mathcal N_n(\frac{m-1}{\be-1}z)}{n}$ equals a non-negative constant $\L$-a.e.~on this subinterval; in the end we extend the result to the whole interval $[0,1]$.

Finally, we apply the theory of random $\beta$-expansions to show that this constant $\gamma$ is strictly positive.

\subsection{Coding of  $[0,1]$ and matrix products}
In this part, we will encode $[0,1]$ via a subshift and show that $\mathcal N_n(\frac{m-1}{\be-1}x)$ can be expressed in terms of  matrix products.
This approach mainly follows \cite{Fen05}.

For $n\in \N$, define
$$
P_n=\{S_J(0):\; J\in \A_n\}\cup \{S_J(1):\; J\in \A_n\}.
$$
The points in $P_n$, written as $h_1,\cdots, h_{s_n}$ (ranked in the increasing order),  partition $[0,1]$ into non-overlapping closed intervals which are called {\it $n$-th net intervals}. Let $\F_n$ denote the collection of $n$-th net intervals, that is,
$$
{\cal F}_n=\left\{[h_j,h_{j+1}]:\  j=1,\ldots, s_n-1\right\}.
$$
For convenience we write $\F_0=\{[0,1]\}$. Since  $P_n\subset P_{n+1}$, we obtain the following  net properties:
 \begin{itemize}
 \item[(i)]
  $\bigcup_{\Delta\in {\cal F}_n}\Delta=[0,1]$ for any $n\geq 0$;
\item[(ii)] For any $\Delta_1, \Delta_2\in {\cal F}_n$ with $\Delta_1\neq \Delta_2$, $\mbox{int}(\Delta_1)\cap
\mbox{int}(\Delta_2)=\emptyset$;
\item[(iii)] For any $\Delta\in {\cal F}_n$ ($n\geq 1$),
there is a unique element $\widehat{\Delta}\in {\cal F}_{n-1}$ such that    $\widehat{\Delta}\supset  \Delta$.
\end{itemize}

For $\Delta=[a,b]\in {\cal F}_n$, we define
 \begin{equation}
 \label{mul}
 \begin{split}
\mathcal N_n(\Delta)&=\#\left\{J\in {\cal A}_n:\ S_{J}\left((0,1)\right)\cap \Delta\neq \emptyset\right\}\\
&=\#\left\{J\in {\cal A}_n:\ S_{J}\left([0,1]\right)\supset  \Delta\right\}.
\end{split}
\end{equation}
It is easy to see that
\begin{equation}
\label{e-2.5}
\mathcal N_n\Bigl(\frac{m-1}{\be-1}z\Bigr)=\mathcal N_n(\Delta)\quad \mbox{ for any $\Delta\in \F_n$ and each  $z\in {\rm int}(\Delta)$},
\end{equation}
where $\mathcal N_n(z)$ is defined as in (\ref{e-1.1}).

As shown in \cite{Fen05}, the interval $[0,1]$ can be coded via a subshift of finite type, and for each $n\geq 1$ and $\Delta\in \F_n$,  $\mathcal N_n(\Delta)$ corresponds to the norm of certain matrix product which depends on the coding of $\Delta$.  More precisely, the following results (C1)-(C4) were obtained in \cite[Section 2]{Fen05}:

\begin{itemize}
\item[(C1)] There exist a finite alphabet $\Omega=\{1,\ldots, r\}$ with $r\geq 2$ and an $r\times r$  matrix $A=(A_{ij})$ with $0$-$1$ entries such that for each $n\geq 0$, there is a one-to-one surjective map $\phi_n:\; \F_n\to \Omega_{A,n+1}^{(1)}$, where
    $$
    \Omega_{A,n+1}^{(1)}=\left\{x_1\ldots x_{n+1}\in \Omega^{n+1}:\; x_1=1,\; A_{x_ix_{i+1}}=1\mbox{ for } 1\leq i\leq n\right\}.
    $$
    The map $\phi_n$ is called the {\it $n$-th coding map} and for $\Delta\in \F_n$,  $\phi_n(\Delta)$  is called {\it the $n$-th coding of $\Delta$}.

    \item[(C2)] The coding maps $\phi_n$ preserve the net structure in the sense that for any  $x_1\ldots x_{n+2}\in \Omega_{A,n+2}^{(1)}$,
    $$\phi_{n+1}^{-1}(x_1\ldots x_{n+2})\subseteq \phi_{n}^{-1}(x_1\ldots x_{n+1}).$$

\item[(C3)] There is a family of positive numbers $\ell_i$,  $1\leq i\leq r$, such that
for each $\Delta\in \F_n$ with $\phi_n(\Delta)=x_1\ldots x_{n+1}$,
$$
|\Delta|=\ell_{x_{n+1}} \rho^n,
$$
where $|\Delta|$ denotes the length of $\Delta$.
\item [(C4)] There are a family of positive integers $v_i$, $1\leq i\leq r$, with $v_1=1$,  and a family of non-negative matrices
$$
\left\{T(i,j):\; 1\leq i,j\leq r,\; A_{ij}=1\right\}
$$
with $T(i,j)$ being a $v_i\times v_j$ matrix, such that for each $n\geq 1$ and $\Delta\in \F_n$,
\begin{equation}
\label{e-m}
\mathcal N_n(\Delta)=\|T(x_1,x_2)\ldots T(x_n,x_{n+1})\|,
\end{equation}
where $x_1\ldots x_{n+1}=\phi_n(\Delta)$, $\|M\|$ denotes the sum of the absolute values of entries of $M$. Furthermore, the product  $T(x_1,x_2)\ldots T(x_n,x_{n+1})$ is a strictly positive $v_{x_{n+1}}$-dimensional row vector.

\end{itemize}

To prove Theorem \ref{thm-1.1}, we still need the  following property of $\Omega$, which was proved in
\cite[Lemma 6.4]{Fen07}):
\begin{itemize}
\item[(C5)] There is a  non-empty subset $\widehat{\Omega}$ of $\Omega$ satisfying the following properties:
 \begin{itemize}
 \item[(i)] $\{j\in
\Omega:\ A_{ij}=1\}\subseteq \widehat{\Omega}$ for any
$i\in \widehat{\Omega}$.
\item[(ii)]
For any $i,j\in
\widehat{\Omega}$, there exist $x_1,\ldots, x_n\in
\widehat{\Omega}$ such that $x_1=i$, $x_n=j$
and $A_{x_kx_{k+1}}=1$ for $1\leq k\leq n-1$.
\item[(iii)]
 For any
$i\in \Omega$ and $j\in \widehat{\Omega}$, there exist $x_1,\ldots, x_n\in
{\Omega}$  such that $x_1=i$, $x_n=j$
and $A_{x_kx_{k+1}}=1$ for $1\leq k\leq n-1$.
\end{itemize}
\end{itemize}

\begin{re}
\label{re-2.1}
Since $\F_n$ has the net structure, we have for each $\Delta\in \F_n$,
$$|\Delta|=\sum_{\Delta'\in F_{n+1},~\Delta'\subseteq \Delta}|\Delta'|,
$$
which together with (C1)-(C3) yields
\begin{equation}
\label{e-2.u}
\ell_i=\rho \sum_{j\in \Omega,~A_{ij}=1} \ell_j \quad \mbox{ for all  } i\in \Omega.
\end{equation}
By part (i) of (C5), we have also
\begin{equation}
\label{e-2.v}
\ell_i=\rho \sum_{j\in \widehat{\Omega},~A_{ij}=1} \ell_j \quad \mbox{ for all  } i\in \widehat{\Omega}.
\end{equation}

\end{re}

\subsection{Proof of Theorem \ref{thm-1.1}}

In this part we prove the following

\begin{thm}
\label{thm-2.1}
There exists a constant $\gamma\geq 0$ such that for  $\Delta\in \F_k$, if the $k$-th coding  $y_1\ldots y_{k+1}=\phi_k(\Delta)$ of $\Delta$ satisfies $y_{k+1}\in \widehat{\Omega}$, then
\begin{equation}
\label{e-2.2}
\lim_{n\to \infty} \frac{\log \mathcal N_n(\frac{m-1}{\be-1}x)}{n}=\gamma  \mbox{ for  $\L$-a.e.\! $x\in \Delta$}.
\end{equation}
 \end{thm}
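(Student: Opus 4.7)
The plan is to apply Kingman's subadditive ergodic theorem to the matrix cocycle in (C4), under a Markov measure on the restricted subshift over $\widehat{\Omega}$, and to translate the resulting a.e.\ convergence into Lebesgue-a.e.\ convergence on $\Delta$ via an explicit identification of the pushforward. The setup is enabled by (C5)(i): once a coding enters $\widehat{\Omega}$ it cannot leave, so if $\phi_k(\Delta)=y_1\cdots y_{k+1}$ with $y_{k+1}\in\widehat{\Omega}$, every refined coding $\phi_n(\Delta')=y_1\cdots y_{k+1}z_1\cdots z_{n-k}$ of a subinterval $\Delta'\subset\Delta$, $\Delta'\in\F_n$, has all $z_i\in\widehat{\Omega}$. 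Hence the asymptotics of $\mathcal N_n$ on $\Delta$ are governed entirely by admissible paths in $\widehat{\Omega}$ starting at $y_{k+1}$.

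Using (\ref{e-2.v}), define transition probabilities $p_{ij}=\rho\ell_j/\ell_i$ for $i,j\in\widehat{\Omega}$ with $A_{ij}=1$; the identity (\ref{e-2.v}) guarantees $\sum_j p_{ij}=1$. By (C5)(ii) the chain is irreducible, hence has a unique strictly positive stationary distribution $\pi$, and the associated stationary Markov measure $\mathbb P$ on $\widehat{\Omega}^{\BN}$ is shift-invariant and ergodic. The crucial feature of this choice is telescoping: conditional on $X_1=y_{k+1}$, the cylinder $[y_{k+1}z_1\cdots z_{n-k}]$ has probability $p_{y_{k+1},z_1}p_{z_1,z_2}\cdots p_{z_{n-k-1},z_{n-k}}=\rho^{n-k}\ell_{z_{n-k}}/\ell_{y_{k+1}}$. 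Combined with (C3), this shows that the pushforward of $\mathbb P(\cdot\mid X_1=y_{k+1})$ under the coding map is exactly the normalized Lebesgue measure on $\Delta$ (constant density $1/|\Delta|$).

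For Kingman's theorem, set $F_n(z)=\log\|T(z_1,z_2)\cdots T(z_n,z_{n+1})\|$. The submultiplicativity $\|AB\|_1\le\|A\|_1\|B\|_1$ for non-negative matrices of compatible dimensions yields $F_{n+m}\le F_n+F_m\circ\sigma^n$; integrability of $F_1$ is immediate since $\widehat{\Omega}$ is finite. Kingman's subadditive ergodic theorem then produces a constant $\gamma\ge0$, intrinsic to the cocycle and independent of $\Delta$ or $k$, such that $F_n/n\to\gamma$ for $\mathbb P$-a.e.\ sequence; since $\pi_{y_{k+1}}>0$, this convergence persists under conditioning on $\{X_1=y_{k+1}\}$, and via the measure identification it transfers to Lebesgue-a.e.\ $x\in\Delta$.

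Finally, (C4) gives $\mathcal N_n(\Delta')=\|u\cdot T(y_{k+1},z_1)\cdots T(z_{n-k-1},z_{n-k})\|$, where $u:=T(y_1,y_2)\cdots T(y_k,y_{k+1})$ is a strictly positive row vector depending only on the fixed prefix. Left multiplication by $u$ changes the $\ell^1$ norm by factors lying between $\min_i u_i$ and $\max_i u_i$, so the exponential growth rate of $\mathcal N_n(\Delta')$ coincides with that of the tail matrix product, which equals $\gamma$; combined with (\ref{e-2.5}) this is precisely (\ref{e-2.2}). I expect the main technical point to be the measure identification in the second paragraph: the transition probabilities $p_{ij}=\rho\ell_j/\ell_i$ are engineered exactly so that Markov-cylinder mass telescopes against the net-interval lengths from (C3), producing a constant density with respect to Lebesgue; without this alignment, an a.e.\ limit statement for $\mathbb P$ would give nothing about Lebesgue a.e.\ $x$.
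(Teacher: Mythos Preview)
Your proposal is correct and follows essentially the same route as the paper: the paper defines the identical transition matrix $P_{ij}=\rho\ell_j/\ell_i$ on $\widehat{\Omega}$, builds the resulting ergodic Markov measure $\eta$, applies Kingman's subadditive ergodic theorem to the cocycle $\log\|T(x_1,x_2)\cdots T(x_n,x_{n+1})\|$, and then uses the same telescoping identity (your $\rho^{n-k}\ell_{z_{n-k}}/\ell_{y_{k+1}}$ computation, the paper's (\ref{e-2.11})) to show that the pushforward of $\eta|_{[y_{k+1}]}$ is a constant multiple of Lebesgue on $\Delta$, finishing with the same strictly-positive-prefix argument from (C4). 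The only cosmetic difference is that the paper restricts $\eta$ to $[y_{k+1}]$ while you condition on $\{X_1=y_{k+1}\}$; these differ by the factor $\pi_{y_{k+1}}>0$ and are equivalent for the purpose of identifying the null sets.
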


Let us first show that Theorem~\ref{thm-2.1} implies Theorem~\ref{thm-1.1}. To see it, we say a net interval $\Delta$ is  {\it good } if it satisfies the condition of Theorem~\ref{thm-2.1}. According to part~(iii) of (C5), there is an positive integer $N$ such that for any net interval  $\Delta\in  \F_n$, there is $k\leq N$ and an $(n+k)$-th net interval which is contained in $\Delta$ and is good. Hence by Theorem~\ref{thm-2.1} and (C2)-(C3), there is a constant $c>0$  such that for any net interval~$\Delta$, (\ref{e-2.2}) holds for a sub-net-interval  of $\Delta$ with Lebesgue measure greater than $c|\Delta|$.   A recursive argument then shows that (\ref{e-2.2}) holds for $[0,1]$.

\begin{proof}[Proof of Theorem \ref{thm-2.1}]
 Consider the one-sided subshift of finite type $(\widehat{\Omega}^\N_A,~\sigma)$, where
$$
\widehat{\Omega}^\N_A=\left\{(x_i)_{i=1}^\infty:~ x_i\in \widehat{\Omega},~A_{x_ix_{i+1}}=1\mbox{ for }  i\geq 1\right\},
$$
and $\sigma$ is the left shift defined by $(x_i)_{i=1}^\infty\mapsto (x_{i+1})_{i=1}^\infty$. By parts (i)-(ii) of (C5),
$(\widehat{\Omega}^\N_A,~\sigma)$ is topologically transitive. Define a matrix $P=(P_{ij})_{i,j\in \widehat{\Omega}}$ by
\begin{equation}
\label{e-t}
P_{ij}=\left\{
\begin{array}{ll}
\rho\ell_j/\ell_i & \mbox{ if } A_{ij}=1,\\
0 &\mbox{ otherwise}.
\end{array}
\right.
\end{equation}
By (\ref{e-2.v}) and part (ii) of (C5), $P$ is an irreducible transition matrix. Hence there is a unique $\#(\widehat{\Omega})$-dimensional positive probability vector ${\bf p}=(p_i)_{i\in \widehat{\Omega}}$ so that ${\bf p} P={\bf p}$. Let $\eta$ be the $({\bf p}, P)$-Markov measure on $\widehat{\Omega}^\N_A$, i.e.,
$$
\eta([x_1\ldots x_n])=p_{x_1}P_{x_1x_2}\ldots P_{x_{n-1}x_n}
$$
for any cylinder set $[x_1\ldots x_n]$ in $\widehat{\Omega}^\N_A$. Since $P$ is irreducible, $\eta$ is ergodic\footnote{The reader may actually check that $\eta$ is the unique invariant measure of  maximal entropy, the so-called {\it Parry measure} on  $\widehat{\Omega}^\N_A$ -- see, e.g., \cite{Wa} for the definition.}. By the definition of $P$, we can check that
\begin{equation}
\label{e-2.11}
\eta([x_1\ldots x_n])=p_{x_1}\ell_{x_n}\rho^{n-1}
\end{equation}
for any cylinder set $[x_1\ldots x_n]$ in $\widehat{\Omega}^\N_A$.

Consider the family of matrices $\{ T(i,j):~ i,j\in \widehat{\Omega},~A_{i,j}=1\}$. Observe that for any $x_1\ldots x_{n+m}\in \widehat{\Omega}_{A,n+m}$,
\begin{eqnarray*}
&\mbox{}&\|T(x_1,x_2)\ldots T(x_{n+m-1},x_{n+m})\|\\
&\mbox{}&\quad ={\bf e}_{v_{x_1}}T(x_1,x_2)\ldots T(x_{n+m-1},x_{n+m}){\bf e}_{v_{x_{n+m}}}^t\\
&\mbox{}&\quad \leq {\bf e}_{v_{x_1}}T(x_1,x_2)\ldots T(x_{n-1},x_{n}){\bf e}_{v_{x_{n}}}^t
{\bf e}_{v_{x_n}} T(x_n,x_{n+1} )\ldots  T(x_{n+m-1},x_{n+m}){\bf e}_{v_{x_{n+m}}}^t\\
&\mbox{}& \quad  =\|T(x_1,x_2)\ldots T(x_{n-1},x_{n})\|\cdot\|T(x_n,x_{n+1})\ldots T(x_{n+m-1},x_{n+m})\|,
\end{eqnarray*}
where ${\bf e}_k$ denotes the $k$-dimensional row vector $(1,1,\ldots,1)$, and ${\bf e}_k^t$ denotes the transpose of ${\bf e}_k$.
By the Kingman subadditive ergodic theorem, there exists a constant $\gamma\geq 0$ such that
\begin{equation}
\label{e-er}
\lim_{n\to \infty}\frac{1}{n}\log \|T(x_1,x_2)\ldots T(x_{n-1},x_n)\|=\gamma\quad \mbox{ for $\eta$-a.e.\! $x=(x_i)_{i=1}^\infty\in \widehat{\Omega}^\N_A$}.
\end{equation}

Now assume that $\Delta$ is a $k$-th net interval with the coding $\phi_k(\Delta)=y_1\ldots y_{k+1}$ such that $y_{k+1}\in \widehat{\Omega}$.
Define the projection map $\pi:~[y_{k+1}]\to \R$ by
\begin{equation}
\{\pi(x)\}=\bigcap_{n=1}^\infty \phi_{n+k}^{-1}(y_1\ldots y_k x_1\ldots x_{n+1}),\quad x=(x_i)_{i=1}^\infty\in \widehat{\Omega} \mbox{ with }x_1=y_{k+1}.
\end{equation}
Since the coding maps preserve the net structure (see (C2)), the projection $\pi$ is well defined and is one-to-one,  except for a countable set on which it is is two-to-one.
Let $\nu=\eta|_{[y_{k+1}]}$ be the restriction of $\eta$ on the cylinder $[y_{k+1}]$. Let $\nu\circ \pi^{-1}$ be the projection of $\nu$ under $\pi$.

We claim that $\nu\circ \pi^{-1}$ is equivalent to ${\mathcal L}|_{\Delta}$, the Lebesgue measure restricted on $\Delta$, in the sense that there exists a constant $C\geq 1$ such that $C^{-1}{\mathcal L}|_{\Delta}\leq \nu\circ \pi^{-1}\leq C {\mathcal L}|_{\Delta}$.
The claim just follows from the fact that for each sub net interval $\Delta'$ with coding $y_1\ldots y_k x_1\ldots x_{n+1}$,
   $$\nu\circ \pi^{-1}(\Delta')=\eta([x_1\ldots x_{n+1}])=p_{x_1}\ell_{x_{n+1}}\rho^{n}=p_{y_{k+1}}\rho^{-k} |\Delta'|,$$
where we use (\ref{e-2.11}) and (C3). Since the collection of sub net intervals of $\Delta$ generates the Borel sigma-algebra on $\Delta$, $\nu\circ \pi^{-1}$ only differs from ${\mathcal L}|_{\Delta}$ by a constant. The claim thus follows.

Now assume that $x=(x_i)_{i=1}^\infty\in [y_{k+1}]$ such that $z=\pi(x)\not\in \bigcup_{n\geq 0}P_n$.  Then by (\ref{e-2.5}),
\begin{eqnarray*}
\mathcal N_{n+k}\Big(\frac{m-1}{\be-1}z\Big)&=&\|T(y_1,y_2)\ldots T(y_k,y_{k+1})T(x_1,x_2)\ldots T(x_n,x_{n+1})\|\\
&\asymp& \|T(x_1,x_2)\ldots T(x_n,x_{n+1})\|,
\end{eqnarray*}
where we use the fact that $T(y_1,y_2)\ldots T(y_k,y_{k+1})$ is a strictly positive vector (see (C4)), and  the notation $a_n\asymp b_n$ means that   $C^{-1}b_n \leq a_n\leq Cb_n$ for  a positive constant $C\geq 1$ independent of $n$. This together with (\ref{e-er}) yields
$$
\lim_{n\to \infty}\frac{1}{n}\log \mathcal N_n\Big(\frac{m-1}{\be-1}\pi(x)\Big)=\gamma \quad\mbox{ for $\eta$-a.e.\! $x\in [y_{k+1}]$},
$$
and hence
$$
\lim_{n\to \infty}\frac{1}{n}\log \mathcal N_n(z)=\gamma \quad\mbox{ for $\nu\circ \pi^{-1}$-a.e.\! $z\in \R$}.
$$
Since $\nu\circ \pi^{-1}$ is equivalent to ${\mathcal L}|_\Delta$, we obtain Theorem \ref{thm-2.1} (and thus, Theorem~\ref{thm-1.1}) with $\gamma\ge0$.
\end{proof}

\subsection{Proof that $\gamma>0$.}\label{subsection2.3} Let us consider first the case of non-integer $\be$. It is clearly sufficient to prove $\ga>0$ for $m=\lfloor\beta\rfloor+1$. Following \cite{DdV}, we introduce the random $\be$-transformation $K_\be$. Namely, put
\begin{equation}\label{eq:Sk}
S_k=\left[\frac k{\be},\frac{\lfloor\beta\rfloor}{\be(\be-1)}+\frac{k-1}{\be} \right]
\end{equation}
(the switch regions) and
\[
E_k=\left(\frac{\lfloor\beta\rfloor}{\be(\be-1)}+ \frac{k-1}{\be},\frac{k+1}{\be}\right),\quad k=1,\dots,\lfloor\beta\rfloor-1,
\]
with
\[
E_0=\bigl[0,\frac1{\be}\bigr),\quad E_{\lfloor\beta\rfloor}=\left(\frac{\lfloor\beta\rfloor} {\be(\be-1)}+ \frac{\lfloor\beta\rfloor-1}{\be}, \frac{\lfloor\beta\rfloor}{\be-1}\right]
\]
(the equality regions). Put now $\Om=\{0,1\}^\BN$ and the map $K_\be:\Om\times I_\be \to \Om\times I_\be$ defined as
\[
K_\be(\om,x)=\begin{cases}
(\omega,\be x-k), & x\in E_k,\ k=0,1,\dots, \lfloor\beta\rfloor,\\
(\sigma(\omega),\be x-k), & x\in S_k\ \text{and}\ \om_1=1,\ k=1,\dots,\lfloor\beta\rfloor,\\
(\sigma(\omega),\be x-k+1), & x\in S_k\ \text{and}\ \om_1=0,\ k=1,\dots,\lfloor\beta\rfloor,
\end{cases}
\]
where $\sigma(\omega_1,\omega_2,\omega_3,\dots)= (\omega_2,\omega_3,\dots)$. The map $K_\be$ generates all $\be$-expansions of $x$ by acting as a shift -- see \cite[p.~159]{DdV} for more details. More precisely, if $x\in E_k$, then the first digit of its $\be$-expansion must be $k$; if $x\in S_k$, it can be either $k$ or $k-1$.

It was shown in \cite{DdV} that there exists a unique probability measure $m_\be$ on $I_\be$ such that $m_\be$ is equivalent to the Lebesgue measure and $\mathbb P\otimes m_\be$ is invariant and ergodic under $K_\be$, where $\mathbb P=\prod_1^\infty \left\{\frac12,\frac12\right\}$.\footnote{It should also be noted that $K_\be$ has a unique ergodic measure of maximal entropy which is singular with respect to $\mathbb P \otimes m_\be$ and whose projection onto the second
coordinate is precisely $\mu_{\be,m}$ -- see \cite{DdV2} for more detail.}

The famous Garsia separation lemma (\cite[Lemma~1.51]{Ga0}) states that there exists a constant $C=C(\be,m)>0$ such that if $\sum_{j=1}^n \e_j\be^{-j}\neq \sum_{j=1}^n \e'_j\be^{-j}$ for some $\e_j,\e'_j\in\{0,1,\dots,m-1\}$, then $\left|\sum_{j=1}^n (\e_j-\e'_j)\be^{-j}\right|\ge C\be^{-n}$. Hence
\begin{equation}\label{eq:garsia}
\#\left\{\sum_{j=1}^n \e_j\be^{-j} \mid \e_j\in\{0,1,\dots,m-1\}\right\}=O(\beta^n).
\end{equation}
In particular, there exist $k\ge 2$ and two words $a_1\dots a_k$ and $b_1\dots b_k$ with $a_j,b_j\in\{0,1,\dots,\lfloor\be\rfloor\}$ such that $\sum_{j=1}^k a_j\be^{-j}= \sum_{j=1}^k b_j\be^{-j}$.

Let $J_{a_1\dots a_k}$ denote the interval of $x$ which can have $a_1\dots a_k$ as a prefix of their $\be$-expansions. (It is obvious that $J_{a_1\dots a_k}=\bigl[\sum_1^k a_j\be^{-j}, \sum_1^k a_j\be^{-j}+\frac{\lfloor\be\rfloor}{\be-1}\be^{-k}\bigr]$.) It follows from the ergodicity of $K_\be$ and \cite[Lemma~8]{DdV} that for $\mathbb P\otimes\L$-a.e. $(\om,x)\in\Omega\times I_\be$ the block $a_1\dots a_k$ appears in the $\be$-expansion of $x$ (specified by $\omega$) with a limiting frequency $\widetilde\gamma>0$.

In particular, for $\mathcal L$-a.e. $x$ there exists a $\be$-expansion $(\e_1,\e_2,\dots)$ which contains the block $a_1\dots a_k$ with the positive limiting frequency $\widetilde\gamma$, i.e.,
\[
\lim_{n\to\infty}\frac1n\ \#\{j: \e_j\dots\e_{j+k-1}=a_1\dots a_k\}=\widetilde\gamma.
\]
Since any such block can be replaced with $b_1\dots b_k$, and the resulting sequence remains a $\be$-expansion of $x$, we conclude, in view of (\ref{e-1.2}), that $\gamma/\log 2\ge\widetilde\gamma>0$.

Let now $\be\in\BN$, so $m\ge\be+1$. In a $\be$-expansion with digits $\{0,1,\dots,m-1\}$ one can replace the block $10$ with $0\be$ without altering the rest of the expansion. Since for $\mathcal L$-a.e. $x$ its $\be$-ary expansion (with digits $0,1,\dots,\be-1$) contains the block $01$ with the limiting frequency $\be^{-2}>0$, we conclude that $\gamma/\log 2\ge \be^{-2}>0$.

The proof of Theorem~\ref{thm-1.1} is complete.

The same argument as above proves
\begin{pro}
If $\be$ satisfies an algebraic equation with integer coefficients bounded by $m$ in modulus, then there exists $C=C(\be,m)>0$ such that
\begin{equation}\label{eq:ineq2}
\liminf_{n\to \infty} \frac{\log \mathcal N_n(x;\be)}{n}\ge C \mbox{ for $\L$-a.e. }\! x\in I_\be.
\end{equation}
\end{pro}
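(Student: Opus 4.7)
\noindent\textit{Proof plan.} The plan is to transplant the ergodicity argument of Subsection~\ref{subsection2.3} essentially verbatim, with only one ingredient changed: the pair of distinct equal-valued digit blocks produced by Garsia's separation lemma in the Pisot case will now be supplied directly by the algebraic equation satisfied by $\be$.

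First, let $P(t)=\sum_{j=0}^{d}c_j t^j\in\Z[t]$ be a non-zero polynomial with $|c_j|\le m-1$ and $P(\be)=0$ (if the hypothesis is phrased only as $|c_j|\le m$, one first performs a short reduction, using $\be<m$, to reach this situation). Dividing by $\be^d$ yields $\sum_{j=0}^{d}c_{d-j}\be^{-j}=0$. For each $j$, split $c_{d-j}=a_j-b_j$ with $a_j,b_j\in\{0,1,\ldots,m-1\}$. Since $P\not\equiv 0$, the words $\mathbf{a}=(a_0,\ldots,a_d)$ and $\mathbf{b}=(b_0,\ldots,b_d)$ are distinct and satisfy $\sum_j a_j\be^{-j}=\sum_j b_j\be^{-j}$, playing exactly the role of the pair furnished in Subsection~\ref{subsection2.3} by \cite[Lemma~1.51]{Ga0} together with the counting bound \eqref{eq:garsia}.

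Next, I would invoke a random $\be$-transformation on $\Om\times I_\be$ for the alphabet $\{0,1,\ldots,m-1\}$. The Dajani-de~Vries construction used above was stated only for $m=\lfloor\be\rfloor+1$, but the same ideas produce, for every $m>\be$, a map $K_\be$ whose orbits enumerate all $\be$-expansions of $x$ and which carries a $K_\be$-invariant ergodic product measure $\mathbb P\otimes m_\be$ with $m_\be\sim\L|_{I_\be}$. Applying the Birkhoff ergodic theorem to the indicator of the cylinder $[\mathbf{a}]$ shows that for $\mathbb P\otimes\L$-a.e.\! $(\om,x)$ the block $\mathbf{a}$ appears in the $\be$-expansion of $x$ coded by $\om$ with positive limiting frequency $\widetilde\gamma>0$. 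By Fubini, for $\L$-a.e.\! $x\in I_\be$ there exists at least one $\be$-expansion of $x$ with this property.

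Finally, fix such an expansion. Extracting a maximal disjoint subfamily of occurrences of $\mathbf{a}$ in a length-$n$ prefix (of cardinality at least $\widetilde\gamma n/(d+1)-o(n)$, by a greedy selection) and independently replacing each selected occurrence by $\mathbf{b}$ produces $\mathcal N_n(x;\be)\ge 2^{\widetilde\gamma n/(d+1)-o(n)}$, which is precisely \eqref{eq:ineq2} with $C=\widetilde\gamma(\log 2)/(d+1)$. The main obstacle lies in the second step: one must be confident that the Dajani-de~Vries framework extends faithfully from $m=\lfloor\be\rfloor+1$ to arbitrary $m>\be$. The ergodic-theoretic core of \cite{DdV} does not use the minimality of $m$, but the combinatorics of the switch regions (and the associated subshift coding of $\Om\times I_\be$) becomes noticeably more intricate as $m$ grows, so verifying the equivalence $m_\be\sim\L$ in this enlarged setup is the one place where care is required.
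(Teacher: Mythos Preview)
Your proposal is correct and is precisely the paper's intended argument: the paper's entire proof reads ``The same argument as above proves'' the proposition, and you have spelled out exactly that argument, with Garsia's separation lemma replaced by the algebraic relation for~$\beta$. Your caveat about extending the Dajani--de~Vries machinery from $m=\lfloor\beta\rfloor+1$ to arbitrary $m>\beta$ is apt---the paper does not address it either---and your disjoint-occurrence factor $1/(d+1)$ is in fact more careful than the paper's own treatment in Subsection~\ref{subsection2.3}.
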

It is an intriguing open question whether (\ref{eq:ineq2}) holds for all $\be>1$. (See also Section~\ref{sec:gr}.)

\subsection{Proof of Corollary~\ref{cor-1.2}}
Note first that (\ref{eq:En}) can be rewritten as follows:
\[
\E_n(x;\be)=\left\{(\e_1,\dots,\e_n)\in\{0,1,\dots,m-1\}^n: x-\frac{(m-1)\be^{-n}}{\be-1}\le \sum_{k=1}^n \e_k\be^{-k}\le x\right\}.
\]
Thus, if $(\e_1,\dots,\e_n)\in\E_n(x;\be)$, then for any $(\e_{n+1},\e_{n+2},\dots)\in\{0,1,\dots,m-1\}^\N$ we have
\[
x-\frac{(m-1)\be^{-n}}{\be-1}\le \sum_{k=1}^\infty \e_k\be^{-k}\le x+\frac{(m-1)\be^{-n}}{\be-1}.
\]
Hence by definition,
\begin{equation}\label{eq:ineq1}
\mu\left(x-\frac{(m-1)\be^{-n}}{\be-1}, x+\frac{(m-1)\be^{-n}}{\be-1}\right)\ge m^{-n}\mathcal N_n(x;\be).
\end{equation}
Put now
\begin{align*}
\E'_n(x;\be)=\Bigl\{&(\e_1,\dots,\e_n)\in\{0,1,\dots,m-1\}^n:
\\ &x-\frac{(m-1)\be^{-n}}{\be-1}-\frac{\be^{-n}}{n^2}\le \sum_{k=1}^n \e_k\be^{-k}\le x+\frac{\be^{-n}}{n^2}\Bigr\}.
\end{align*}
We are going to need the following
\begin{lem}\label{lem:finite}
For $\mathcal L$-a.e. $x\in I_\be$ we have $\E'_n(x;\be)=\E_n(x;\be)$ for all $n$, except, possibly, a finite number (depending on $x$).
\end{lem}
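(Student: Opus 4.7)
The plan is a first-moment/Borel--Cantelli argument, exploiting the Garsia separation property encoded in \eqref{eq:garsia}.

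First, observe that by the definitions we always have $\E_n(x;\be)\subseteq \E'_n(x;\be)$. An element $(\e_1,\dots,\e_n)\in\E'_n(x;\be)\setminus \E_n(x;\be)$ is characterised by its partial sum
\[
s=\sum_{k=1}^n \e_k\be^{-k}
\]
lying in the union of two ``boundary strips''
\[
B_n(x):=\Bigl[x-\tfrac{(m-1)\be^{-n}}{\be-1}-\tfrac{\be^{-n}}{n^2},\;x-\tfrac{(m-1)\be^{-n}}{\be-1}\Bigr)\cup \Bigl(x,\;x+\tfrac{\be^{-n}}{n^2}\Bigr],
\]
of total length $2\be^{-n}/n^2$. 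Thus $\E'_n(x;\be)\ne\E_n(x;\be)$ iff $B_n(x)$ meets the finite set
\[
D_n:=\Bigl\{\sum_{k=1}^n \e_k\be^{-k}:\ \e_k\in\{0,1,\dots,m-1\}\Bigr\}.
\]

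Now I would invoke the Garsia separation estimate \eqref{eq:garsia}, which gives $\#D_n=O(\be^n)$. Equivalently, each $s\in D_n$ makes $x$ ``bad'' only if $x$ lies in an interval of Lebesgue measure $2\be^{-n}/n^2$ surrounding a certain translate of $s$. Taking the union over $s\in D_n$, the set
\[
G_n:=\{x\in I_\be:\ \E'_n(x;\be)\ne \E_n(x;\be)\}
\]
satisfies
\[
\mathcal L(G_n)\le \#D_n\cdot \tfrac{2\be^{-n}}{n^2}=O(n^{-2}).
\]
Since $\sum_n n^{-2}<\infty$, the Borel--Cantelli lemma yields $\mathcal L\bigl(\limsup_n G_n\bigr)=0$, which is exactly the claim.

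The only step that requires care is the bookkeeping in step two (identifying $G_n$ as a union over $D_n$ of small intervals of controlled length); after that, the uniform $O(\be^n)$ bound on $\#D_n$ from Garsia's lemma makes the Borel--Cantelli sum convergent automatically thanks to the factor $n^{-2}$ built into the definition of $\E'_n$.
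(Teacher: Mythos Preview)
Your proof is correct and follows essentially the same route as the paper: identify the boundary strips, invoke Garsia's separation estimate \eqref{eq:garsia} to bound $\#D_n=O(\be^n)$, deduce $\mathcal L(G_n)=O(n^{-2})$, and apply Borel--Cantelli. The only cosmetic difference is that you make the Fubini-type switch (from ``$s\in B_n(x)$'' to ``$x$ in a short interval around a translate of $s$'') explicit, whereas the paper simply states the $O(1/n^2)$ bound directly.
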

\begin{proof}We have
\begin{align*}
\E'_n(x;\be)\setminus\E_n(x;\be)&=\left\{(\e_1,\dots,\e_n): 0< x-\frac{(m-1)\be^{-n}}{\be-1}- \sum_{k=1}^n \e_k\be^{-k}\le \frac{\be^{-n}}{n^2}\right\} \\
 &\cup \left\{(\e_1,\dots,\e_n): 0< \sum_{k=1}^n\e_k\be^{-k}-x\le\frac{\be^{-n}}{n^2}\right\}.
\end{align*}
Hence, in view of (\ref{eq:garsia}),
\[
\L\left\{x : \E'_n(x;\be)\setminus\E_n(x;\be)\neq\emptyset \right\}=O\left(\frac{1}{n^2}\right),
\]
whence by the Borel-Cantelli lemma,
\[
\L\bigl\{x : \E'_n(x;\be)\setminus\E_n(x;\be)\neq\emptyset \,\,\text{for an infinite set of $n$}\bigr\}=0.
\]
\end{proof}

Return to the proof of the corollary. Put
\[
\mathcal D'_n(x;\be)=\left\{(\e_1,\e_2,\dots) : x-\frac{\be^{-n}}{n^2}\le \sum_{k=1}^\infty\e_k\be^{-k}\le x+\frac{\be^{-n}}{n^2}\right\}.
\]
Note that if $(\e_1,\e_2,\dots)\in\mathcal D'_n(x;\be)$, then $(\e_1,\dots,\e_n)\in\mathcal E'_n(x;\be)$, since $\sum_1^n \e_k\be^{-k}\ge\sum_1^\infty\e_k\be^{-k}- \frac{(m-1)\be^{-n}}{\be-1}$. Thus, by Lemma~\ref{lem:finite}, for $\mathcal L$-a.e. $x$ and all sufficiently large $n$,
\[
\mu\left(x-\frac{\be^{-n}}{n^2},x+\frac{\be^{-n}}{n^2}\right) \le m^{-n}\mathcal N_n(x;\be).
\]
Together with (\ref{eq:ineq1}), we obtain for $\mathcal L$-a.e. $x$,
\[
\mu\left(x-\frac{\be^{-n}}{n^2},x+\frac{\be^{-n}}{n^2}\right) \le m^{-n}\mathcal N_n(x;\be)\le \mu\left(x-\frac{(m-1)\be^{-n}}{\be-1}, x+\frac{(m-1)\be^{-n}}{\be-1}\right).
\]
Taking logs, dividing by $n$ and passing to the limit as $n\to\infty$ yields the claim of Corollary~\ref{cor-1.2}.\footnote{It is easy to see that $d(\mu,x)$ exists if the limit in (\ref{e-local}) exists along some exponentially decreasing subsequence of $r$.}

\section{Proof of Theorem \ref{thm-1.2}}
\setcounter{equation}{0}
We first introduce some notation.  For $q\in \R$, we use $\tau(q)$
to denote the {\it  $L^{q}$ spectrum of $\mu$}, which is defined
 by
\begin{equation*}
{\tau}(q)=\liminf_{r\rightarrow 0+}\frac{\log \left( \sup
\sum_{i}\mu ([x_{i}-r,x_i+r])^{q}\right) }{\log r},
\end{equation*}
where the supremum is taken over all the disjoint families $%
\{[x_{i}-r,x_i+r]\}_{i}$ of closed intervals with $x_{i}\in [0,1]$.
It is easily checked that ${\tau}(q)$ is a concave
function of $q$ over $\R$,   $\tau(1)=0$ and $\tau(0)=-1$. For $\alpha\geq 0$, let
$$E(\alpha)=\{x\in [0,1]:\; d(\mu,x)=\alpha\},$$ where $d(\mu,x)$ is defined as in (\ref{e-local}).  The following lemma is a basic fact in multifractal analysis (see, e.g., \cite[Theorem 4.1]{LaNg99} for a proof).

\begin{lem} Let $\alpha\geq 0$.
If $E(\alpha)\neq \emptyset$, then
\begin{equation}
\label{e-1}
\dim_H E(\alpha)\leq \alpha q-\tau(q), \qquad \forall \;q\in \R.
\end{equation}
\end{lem}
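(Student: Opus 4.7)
The plan is to carry out the standard covering argument from multifractal analysis. First I would fix $\e>0$ and decompose $E(\alpha)$ as a countable union of ``regular'' level sets
\[
E_N(\alpha)=\Bigl\{x\in E(\alpha): r^{\alpha+\e}\le \mu([x-r,x+r])\le r^{\alpha-\e}\text{ for every }0<r\le 1/N\Bigr\},
\]
so that $E(\alpha)=\bigcup_N E_N(\alpha)$ by the definition of $d(\mu,x)=\alpha$; by the countable stability of Hausdorff dimension it then suffices to bound $\dim_H E_N(\alpha)$ for each fixed~$N$.

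Next, for each $r\in(0,1/N]$ I would pick a maximal $2r$-separated subset $\{x_i\}$ of $E_N(\alpha)$. Then the closed intervals $\{[x_i-r,x_i+r]\}_i$ are pairwise disjoint while $\{[x_i-2r,x_i+2r]\}_i$ still covers $E_N(\alpha)$. Because $\{x_i\}\subset[0,1]$, the $\liminf$ definition of $\tau(q)$ (together with the fact that $\log r<0$ for small $r$) gives the pointwise inequality
\[
\sum_i\mu([x_i-r,x_i+r])^{q}\le r^{\tau(q)-\e}
\]
for all sufficiently small $r>0$.

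Now I would combine this with the uniform local-dimension bounds on $E_N(\alpha)$, carefully distinguishing the sign of $q$. For $q>0$, $\mu([x_i-r,x_i+r])^{q}\ge r^{(\alpha+\e)q}$, hence $\#\{x_i\}\le r^{\tau(q)-\e-(\alpha+\e)q}$; since $E_N(\alpha)$ is covered by that many intervals of diameter $\le 4r$, letting $r\to 0$ yields $\dim_H E_N(\alpha)\le(\alpha+\e)q-\tau(q)+\e$. For $q<0$ the upper bound $\mu([x_i-r,x_i+r])\le r^{\alpha-\e}$ flips on exponentiation to $\mu^{q}\ge r^{(\alpha-\e)q}$, and the same bookkeeping produces $\dim_H E_N(\alpha)\le(\alpha-\e)q-\tau(q)+\e$. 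The case $q=0$ is immediate because $\tau(0)=-1$ and $E(\alpha)\subset[0,1]$. Finally, sending $N\to\infty$ and then $\e\to 0$ gives the claim $\dim_H E(\alpha)\le\alpha q-\tau(q)$.

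The main delicate point is purely bookkeeping: translating the $\liminf$ definition of $\tau(q)$ into a pointwise (all small $r$) inequality $S(r)\le r^{\tau(q)-\e}$, which crucially uses $\log r<0$, and correctly reversing the inequalities as $q$ changes sign. Once the level set $E_N(\alpha)$ is introduced and a maximal $2r$-separated subset of it is selected, the estimate essentially writes itself.
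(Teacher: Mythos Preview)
Your argument is correct and is exactly the standard covering argument one finds in the multifractal literature. Note that the paper does not give its own proof of this lemma; it simply cites \cite[Theorem~4.1]{LaNg99}, whose proof is essentially the one you wrote down: restrict to a uniform level set, pick a maximal $2r$-separated family, bound the partition sum via the definition of $\tau(q)$, and split on the sign of $q$. One small cosmetic point: your inequality $S(r)\le r^{\tau(q)-\e}$ for all sufficiently small $r$ only holds when $\tau(q)>-\infty$; if $\tau(q)=-\infty$ the conclusion $\dim_H E(\alpha)\le \alpha q-\tau(q)=+\infty$ is vacuous, so no harm is done, but it is worth a one-line remark.
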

\medskip

\begin{proof}[Proof of Theorem~\ref{thm-1.2}]
Set $t=({\log m -\gamma})/{\log \beta}$. By Corollary~\ref{cor-1.2}, we have $d(\mu,x)=t$ for $\L$-a.e. \!$x\in [0,1]$.
It was proved in \cite[Proposition~5.3]{DST99} that $\mu$ is absolutely continuous  if and only if $\beta$ is an integer so that $\beta|m$. When $\mu$ is absolutely continuous, $d(\mu,x)=1$ for $\L$-a.e. \!$x\in [0,1]$ and hence
$t=1$, which implies that  $\gamma=\log(m/\beta)$.

In the following we assume that $\mu$ is singular. It was proved in \cite{PrUr89} that $\dim_H\mu<1$.
Since $d(\mu,x)=t$ for $\L$-a.e.\! $x\in [0,1]$, we have $\L(E(t))=1$ and hence $\dim_H E(t)=1$. By (\ref{e-1}), we have
    \begin{equation}
    \label{e-2}
    1\leq t q-\tau(q),\qquad \forall\; q\in \R.
    \end{equation}
    Taking $q=1$ in (\ref{e-2}) and using the fact $\tau(1)=0$, we have  $t\geq 1$.
     It was proved in \cite{Fen03} that $\tau(q)$ is differentiable for $q>0$ and $\dim_H\mu=\tau^\prime(1)$. Since $\tau$ is also concave, $\tau^\prime$ is continuous on $(0,+\infty)$.
         By (\ref{e-2})
     and the fact $\tau(0)=-1$, we have $\tau(q)-\tau(0)\leq t q$ for all $q\in \R$, which implies
    \begin{equation}
    \label{e-3.13}
    \tau^\prime(0+)\leq  t\leq \tau^\prime(0-).
    \end{equation}
     Since $\tau$ is concave, it is absolutely continuous on $[0,1]$ and hence
          \begin{equation}
          \label{e-3.14}
     1=\tau(1)-\tau(0)=\int_{[0,1]}\tau^\prime(x)\; dx.
     \end{equation}
          Since $\tau^\prime(1)=\dim_H\mu<1$, and $\tau^\prime$ is non-increasing on $(0,1)$, by (\ref{e-3.14}) we must have
           $\tau^\prime(0+)=\lim_{q\to 0+}\tau^\prime(q)>1$.
   This together with (\ref{e-3.13}) yields  $t>1$. Hence we have
   $\gamma<\log(m/\beta)$.
    \end{proof}

   % \bigskip
\begin{re}
 \begin{itemize}
  \item[(1)] It is interesting to compare Proposition~\ref{cor2} with a similar result for a Bernoulli-generic $x$. Let, for simplicity, $m=2$; then it is known that $d(\mu_{\be,2})\equiv H_\be<1$ for $\mu_\be$-a.e. $x$. -- see \cite{Lalley}. Here $H_\be$ is Garsia's entropy introduced in \cite{Ga} (see also \cite{HS} for some lower bounds for $H_\be$).
 \item[(2)] It was proved in \cite{Fen03} that the set of local dimensions of  $\mu$ contains the set
     $\{\tau^\prime(q):\; q>0\}$. In the  case that $\mu$ is singular, this set contains a neighborhood of $1$. To see it, just note that  $\tau^\prime(1)=\dim_H\mu<1<\tau^\prime(0+)$.

   \item[(3)]  We do not know whether the set of local dimensions of  $\mu$,
    $$
    \{\alpha\geq 0:\;E(\alpha)\neq \emptyset\},$$
     is always a closed interval. Nevertheless,  it was proved  in \cite{Fen07} that for each Pisot number $\beta$ and positive integer $m$, there exists an interval $I$ with $\mu(I)>0$  such that  the set of local dimensions of  $\mu|_I$ is always a closed interval, where $\mu|_I$ denotes the restriction of $\mu$ on $I$.
\item[(4)]  We conjecture  that  $\tau^\prime(0)$ exists. If this is true, by (\ref{e-3.13}) we have $t=\tau^\prime(0)$.
 \item[(5)] The following result can be proved in a way similar to the proof of Theorem \ref{thm-1.2}: assume that $\eta$ is a compactly supported Borel probability measure on $\R^d$ so that
 $d(\eta, x)=t$ on a set of Hausdorff dimension $d$. Then $t>d$ if $\tau'(1-)<d$. The reader is referred to \cite{Fen03} for the definitions of $d(\eta,x)$ and $\tau(q)$ for a measure  on $\R^d$.

    \end{itemize}
     \end{re}

\section{Examples}

As we have seen from the proof of Theorem~\ref{thm-1.1}, the exponent $\gamma$ in (\ref{e-1.2}) corresponds to the Lyapunov exponent of certain family of non-negative matrices. In the case when this family contains a rank-one matrix (for instance, this occurs when $v_i=1$ for some $i\in \widehat{\Omega}$), the corresponding matrix product is degenerate and one may obtain an explicit theoretic formula (via series expansion) for $\gamma$. Let us consider an important family of examples.

\begin{ex}
{\rm
Fix an integer $n\geq 2$. Let $\beta_n$ be the positive root of $x^n=x^{n-1}+\ldots+x+1$ (often called the $n$'th {\em multinacci number}). Let $m=2$. The following formula for $\gamma_n=\gamma(\be_n)$ was obtained  in \cite[Theorem~1.2]{Fen05}:
\begin{equation}
\label{e-4.1}
\gamma_n=\frac{\beta^{-n}\left(1-2\beta^{-n}\right)^2} {2-(n+1)\beta^{-n}}\sum\limits_{k=0}^\infty \left(\beta^{-nk}\sum_{J\in {\cal A}_k}\log
\|M_J\|\right),
\end{equation}
where $\A_0=\{\emptyset\}$ and $\A_k=\{1,2\}^k$ for $k\geq 1$. $M_\emptyset$ denotes the $2\times 2$ identity matrix, and   $M_1, M_2$ are two $2\times 2$ matrices given by
\begin{equation*}
M_1=\left[
\begin{array}{ll}
1 & 1 \\
0 & 1
\end{array}
\right] ,\;\;\;M_2=\left[
\begin{array}{ll}
1 & 0 \\
1 & 1
\end{array}
\right] .  \label{mm}
\end{equation*}
For $J=j_1\ldots j_k\in \A_k$,  $M_J$ denotes $M_{j_1}M_{j_2}\ldots
M_{j_n}$. For any $2\times 2$ non-negative matrix $B$,  $\|B\|=(1,1)B(1,1)^{t}.$

The numerical estimations in Table~\ref{tat} were given in \cite{Fen05} for $\gamma_n/\log 2$, $n=2,\ldots,10$. We also include in the table the approximate values for $D_{\be}=D_{\be,2}$ (see Corollary~\ref{cor-1.2} and Proposition~\ref{cor2}) and for Garsia's entropy $H_{\be}$ for comparison (taken from \cite{GKT}).

\begin{table}[tph]
%\caption{}
%\begin{equation*}
\begin{tabular}{|c|c|c|c|c|}
\hline
$n$ & $\be_n$ &  $\gamma_n/\log 2$ & $D_{\be_n}$ & $H_{\be_n}$
\\
\hline
$2$ & $1.618034$ & $0.302\pm 0.001$ & $1.0054\pm 0.0015$ & $0.995713$\\ \hline
$3$ & $1.839287$ & $0.102500$ & 1.028876 & $0.980409$ \\ \hline
$4$ & $1.927562$ & $0.041560$ & 1.012318 & $0.986926$\\ \hline
$5$ & $1.965948$ & $0.018426$ & 1.006510 & $0.992585$\\ \hline
$6$ & $1.983583$ & $0.008590$ & 1.003341 & $0.996033$ \\ \hline
$7$ & $1.991964$& $0.004123$ & 1.001695 & $0.997937$ \\ \hline
$8$ & $1.996031$ & $0.002014$ & 1.000854 & $0.998945$\\ \hline
$9$ & $1.998029$ & $0.000993$ & 1.000429 & $0.999465$\\ \hline
$10$ & $1.999019$ & $0.000493$ & 1.000215 & $0.999731$\\ \hline
\end{tabular}
\bigskip
\caption{Approximate values of $\ga, D_\be$ and $H_\be$ for the multinacci family}
\label{tat}
\end{table}
}

\end{ex}

\section{Proof of Theorem~\ref{thm-1.5} and Corollary~\ref{cor-1.5}}
\label{sec:gr}

%It is shown in \cite[Theorem~3]{EJK} that if %$\be<\frac{1+\sqrt5}2$ and $m=2$, then {\em every} %$x\in(0,1/(\be-1))$ has a continuum of $\be$-expansions. Our %goal is to show that the growth exponent of such a continuum %is always positive.

Let us first observe that without loss of generality we may confine ourselves to the case $m=2$. Indeed, if $m\ge3$ and $x\in\bigl(0,\frac1{\be-1}\bigr)$, then we can use digits $0,1$ and apply Theorem~\ref{thm-1.5} for $m=2$. If $x\in\bigl(\frac{j}{\be-1},\frac{j+1}{\be-1}\bigr)$ for $1\le j\le m-2$, then we put $y=x-\frac{j}{\beta-1}$ and apply Theorem~\ref{thm-1.5} for $m=2$ to $y$. For the original $x$ the claim will then follow with $\e_n\in\{j,j+1\}$.

If $x=\frac{j}{\be-1}$ with $1\le j\le m-2$, then we set $\e_1=j-1$ so
\[
\be\left(x-\frac{\e_1}{\be}\right)= \frac{\be+j-1}{\be-1}=\frac{\e_2}{\be}+\frac{\e_3}{\be^2} +\cdots
\]
It suffices to observe that $\frac{j}{\be-1}<\frac{\be+j-1}{\be-1}<\frac{j+1}{\be-1}$ and apply the above argument to $(\e_2,\e_3,\dots)$.

So, let $m=2$ and let $x\in I_\be$ have at least two $\be$-expansions; then there exists the smallest $n\ge0$ such that
$x\sim(\e_1,\dots,\e_n,\e_{n+1},\dots)_\be$ and
$x\sim(\e_1,\dots,\e_n,\e'_{n+1},\dots)_\be$ with $\e_{n+1}\neq \e'_{n+1}$.
We may depict this ``bifurcation'' as is shown in
Figure~\ref{fig:branching}.

\begin{figure}[t]
    \centering %\unitlength=0.35mm
\centerline{
\begin{picture}(265,210)%(30,30)
\thicklines  \put(0,100){\line(1,0){100}}
\put(100,100){\line(0,-1){50}}
\put(100,100){\line(0,1){50}}\put(100,150){\line(1,0){120}}
\put(100,50){\line(1,0){100}}
%%%%%%%%%%%%%%%%%%%%%%%%%%%%%%%%%%%%%%%%%%%%%%%%%
\put(220,150){\line(0,1){30}} \put(220,150){\line(0,-1){30}}
\put(200,50){\line(0,1){30}} \put(200,50){\line(0,-1){30}}
\put(200,20){\line(1,0){50}}\put(200,80){\line(1,0){50}}
\put(220,120){\line(1,0){50}}\put(220,180){\line(1,0){50}}
%%%%%%%%%%%%%%%%%%%%%%%%%%%%%%%%%%%%%%%%%%%%%%%%
\put(274,179){$\dots$}\put(274,119){$\dots$}
\put(254,79){$\dots$}\put(254,19){$\dots$}
%%%%%%%%%%%%%%%%%%%%%%%%%%%%%%%%%%%%%%%%%%%%%%%%%
\put(0,106){$\e_1\,\,\,\,\, \e_2\,\,\dots\,\, \e_{n-1}\,\,\, \e_n$}
\put(103,155){$\e_{n+1}\,\,\,\e_{n+2}\,\,\,\dots\,\,\dots\,\,
\e_{n_2}$}
\put(103,58){$\e'_{n+1}\,\,\,\e'_{n+2}\,\,\,\dots\,\,\,\e'_{n_2'}$}
%\put(205,99){$\dots$}\put(205,99){$\dots$}
\put(-10,97){$x$}
\end{picture}}
\caption{Branching and ``bifurcations''}
    \label{fig:branching}
\end{figure}
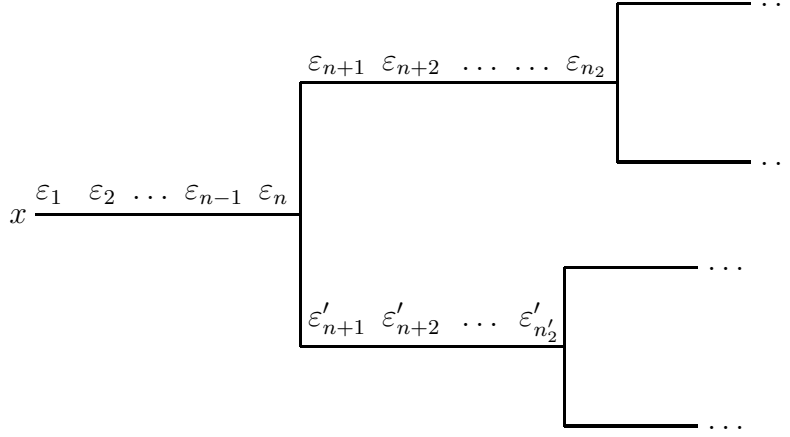

If $(\e_{n+1},\e_{n+2},\dots)$ is not a unique expansion, then there exists $n_2>n$ with the same property, etc. As a result, we obtain a subtree of the binary tree which corresponds to the set of all $\be$-expansions of $x$, which we call the {\it branching tree of $x$} and denote by $\T(x;\be)$.

The following claim is straightforward:

\begin{lem} \label{lem:estimate}
Suppose $K\in\N$ is such that the length of each branch is at most $K$. Then
\[
\mathcal N_n(x;\be) \ge 2^{n/K-1}.
\]
\end{lem}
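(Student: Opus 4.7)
The plan is to prove $\mathcal N_n(x;\be) \ge 2^{\lfloor n/K\rfloor}$ by induction on $j=\lfloor n/K\rfloor$, which immediately gives the stated bound since $\lfloor n/K\rfloor \ge n/K-1$. First I would observe that $\mathcal N_n$ is monotone nondecreasing in $n$: the truncation map from length-$n$ prefixes onto length-$n'$ prefixes (for $n' \le n$) is surjective since every prefix extends to at least one $\be$-expansion of $x$. So it suffices to show $\mathcal N_{jK}(x;\be) \ge 2^j$ for all $j\ge 0$.

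The base case $j=0$ is trivial (the empty prefix is always counted). For the inductive step, the key point is the following: any length-$jK$ prefix $p$ that extends to a $\be$-expansion of $x$ has at least two distinct length-$(j+1)K$ extensions in $\T(x;\be)$. Indeed, $p$ sits at depth $jK$ somewhere on a branch of $\T(x;\be)$; say this branch starts at depth $a\le jK$ and terminates at a bifurcation at depth $b$. By hypothesis $b-a\le K$, so $b \le a+K \le jK+K = (j+1)K$. Thus the next bifurcation encountered along any path through $p$ occurs at depth at most $(j+1)K$, and after it we find two distinct continuations, each of which can be truncated to produce two distinct length-$(j+1)K$ prefixes extending $p$. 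Consequently $\mathcal N_{(j+1)K}(x;\be) \ge 2\,\mathcal N_{jK}(x;\be)$, and the induction closes.

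The only subtlety worth being careful about is that the branch containing $p$ is genuinely finite (so a bifurcation actually occurs within distance $K$); but this is exactly the running hypothesis of the lemma. With $\mathcal N_{jK}(x;\be)\ge 2^j$ established, writing $n=jK+r$ with $0\le r<K$ and using monotonicity gives $\mathcal N_n(x;\be)\ge 2^{\lfloor n/K\rfloor}\ge 2^{n/K-1}$, as required.
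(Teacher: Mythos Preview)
Your argument is correct and follows the same route as the paper: the paper's proof simply asserts that $\mathcal N_{Kn}(x;\be)\ge 2^n$ is obvious and that the claim follows, which is exactly your inductive statement $\mathcal N_{jK}(x;\be)\ge 2^j$ together with monotonicity. You have merely spelled out the ``obvious'' step in detail.
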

\begin{proof}It is obvious that $\mathcal N_{Kn}(x;\be) \ge 2^{n}$, which yields the claim.
\end{proof}

\begin{thm}\label{thm:lessgr}
Suppose $1<\be<\frac{1+\sqrt5}2$ and put
\begin{equation}\label{eq:ga}
\kappa=\begin{cases} \frac12 \left(\left\lfloor \log_\be
\frac{\be^2-1}{1+\be-\be^2}\right\rfloor+1\right)^{-1},& \be>\sqrt2 \\
\frac12 \left(\left\lfloor \log_\be
\frac{1}{\be-1}\right\rfloor+1\right)^{-1}, & \be\le\sqrt2.
\end{cases}.
\end{equation}
Then for any $x\in\bigl[\frac1{\be},\frac1{\be(\be-1)}\bigr]$ we have
\begin{equation}\label{eq:Nnl}
\mathcal N_n(x;\be) \ge 2^{\kappa n-1}.
\end{equation}
\end{thm}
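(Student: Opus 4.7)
The plan is to apply Lemma~\ref{lem:estimate} to a judiciously chosen subtree of the branching tree $\T(x;\be)$. For $y\in I_\be$ I would distinguish the switch region $S=[1/\be,1/(\be(\be-1))]$ (where both digits $0$ and $1$ are admissible and bifurcations occur) and the forced regions $F_0=[0,1/\be)$ and $F_1=(1/(\be(\be-1)),1/(\be-1)]$, on which the orbit iterates under $T_0(z)=\be z$ and $T_1(z)=\be z-1$ respectively. The fixed points $0$ of $T_0$ and $1/(\be-1)$ of $T_1$ are both repelling with derivative $\be>1$, so any forced excursion that does not start exactly at a fixed point must return to $S$ in finitely many steps, and each branch of $\T(x;\be)$ corresponds to such an excursion terminating at the next bifurcation.

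The quantitative heart of the argument is to bound the branch length uniformly. If $y^*\in S$ is a bifurcation point, its children are $z_0=\be y^*\in[1,1/(\be-1)]$ and $z_1=\be y^*-1\in[0,(2-\be)/(\be-1)]$. Using the closed forms $T_1^kz_0=\frac1{\be-1}-\be^k(\frac1{\be-1}-z_0)$ and $T_0^kz_1=\be^kz_1$, the first return to $S$ occurs at the smallest $k$ with $\be^k(\frac1{\be-1}-z_0)\ge\frac1\be$ (upper branch) or $\be^kz_1\ge\frac1\be$ (lower branch). For $\be\le\sqrt2$ the intervals $\{y^*\in S:z_0\in S\}=[1/\be,1/(\be^2(\be-1))]$ and $\{y^*\in S:z_1\in S\}=[(\be+1)/\be^2,1/(\be(\be-1))]$ overlap and cover all of $S$, so at least one child is always in $S$ immediately and the residual branch length is controlled by $\log_\be\frac1{\be-1}$. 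For $\be>\sqrt2$ the two intervals leave a gap $(1/(\be^2(\be-1)),(\be+1)/\be^2)\subset S$; but whenever $y^*$ lies in this gap a direct computation shows that both $1/(\be-1)-z_0$ and $z_1$ are bounded below by $(1+\be-\be^2)/(\be(\be-1))$, which produces the bound $\log_\be\frac{\be^2-1}{1+\be-\be^2}$ appearing in~\eqref{eq:ga}.

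The main obstacle is the degenerate behaviour at the endpoints of $S$: at $x=1/\be$ the digit $\e_1=1$ sends the orbit to the fixed point $0$, and at $x=1/(\be(\be-1))$ the digit $\e_1=0$ sends it to $1/(\be-1)$, each producing an infinite non-bifurcating branch that destroys the binary structure at the root. I would handle this by taking a single preliminary step along the live branch, which lands the orbit in the interior of $S$ bounded away from both endpoints, and then applying the uniform bound of the previous paragraph to the subsequent bifurcations. The same two-step precaution may be required at any later bifurcation whose sibling happens to be a preimage of a fixed point; this is the source of the factor $2$ in the denominator of $\kappa=1/(2(K+1))$, as one $(K+1)$-block accounts for the main forced excursion and the other absorbs the setup step. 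With such a binary subtree in hand, whose branches all have length at most $2(K+1)$, Lemma~\ref{lem:estimate} yields $\mathcal N_n(x;\be)\ge 2^{n/(2(K+1))-1}=2^{\kappa n-1}$, completing the proof.
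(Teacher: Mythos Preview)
Your plan follows the paper's proof: reduce to Lemma~\ref{lem:estimate} by constructing a binary subtree of $\T(x;\be)$ whose branch lengths are uniformly bounded, using the expanding dynamics of $T_0,T_1$ on the forced regions to control return times to the switch region $S=\Delta_\be$, and paying a factor~$2$ for a one-step detour near the endpoints.

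There is, however, a genuine gap in your case analysis. You treat the degeneracy only at the \emph{exact} endpoints of $S$ (and their preimages), but the obstruction occurs on a full neighbourhood: for any $y^*$ in your left side interval $[1/\be,1/(\be^2(\be-1))]$, the child $z_1=\be y^*-1$ can be arbitrarily close to $0$, so its return time to $S$ is unbounded. Thus on the side intervals you cannot keep both children with a uniform bound, and merely keeping the child that lands in $S$ does not by itself produce a binary subtree. The paper fixes this by partitioning $\Delta_\be$ differently: a \emph{central} interval $\bigl[\frac{1+\delta}{\be},\frac{1}{\be(\be-1)}-\frac{\delta}{\be}\bigr]$, on which \emph{both} $L_\be(x)$ and $R_\be(x)$ lie in $[\delta,\frac1{\be-1}-\delta]$ and hence return to $\Delta_\be$ within $\lfloor\log_\be(1/\delta)\rfloor+1$ steps; and two \emph{peripheral} intervals of width $\delta/\be$. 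On a peripheral interval one discards the near-fixed-point child, steps once to $L_\be(x)\in\Delta_\be$, and then \emph{verifies by direct computation} that both grandchildren $L_\be L_\be(x)$ and $R_\be L_\be(x)$ lie in $(\delta,\frac1{\be-1}-\delta)$. It is precisely this verification (checking $\be(1+\delta)\le\frac1{\be-1}-\delta$ and $\be-1\ge\delta$) that forces the choice $\delta=\min\bigl(\frac{1+\be-\be^2}{\be^2-1},\,\be-1\bigr)$ and hence the formula~\eqref{eq:ga}. Your gap/side-interval split is a legitimate alternative, but you still need the analogous verification on the \emph{entire} side intervals, not just at their endpoints, to close the argument and justify the factor~$2$.
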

\begin{proof} In view of Lemma~\ref{lem:estimate}, it suffices to construct a subtree of $\T(x;\be)$ such that the length of its every branch is at most $1/\kappa$. Put $\Delta_\be=\bigl[\frac1{\be},\frac1{\be(\be-1)}\bigr]$; it is easy to check that one can choose different $\e_1$ for $x$ if and only if $x\in\Delta_\be$\footnote{Notice that $\De_\be$ is none other than $S_1$ given by (\ref{eq:Sk}) for $m=2$ and $\lfloor\be\rfloor=1$.}.

Put
\begin{equation}\label{eq:delta}
\delta=\min\,\left(\frac{1+\be-\be^2}{\be^2-1},\be-1\right)=
\begin{cases} \frac{1+\be-\be^2}{\be^2-1} & \text{if}\ \ \be>\sqrt2,\\
\be-1 & \text{if}\ \ \be\le\sqrt2
\end{cases}.
\end{equation}
Note that $\delta>0$, in view of $1<\be<\frac{1+\sqrt5}2$. Put $L_\be(x)=\be x,\ R_\be(x)=\be x-1$. The maps $L_\be$ and $R_\be$ act as shifts on the $\be$-expansions of $x$, namely, $L_\be(x)$ shifts a $\be$-expansion of $x$ if $\e_1=0$ and $R_\be$ -- if $\e_1=1$. Thus, by applying all possible compositions of the two maps we obtain all $\be$-expansions of $x$. (See subsection~\ref{subsection2.3} for more detail.)

Assume first that $x\in\De_\be$. We have two cases.

\medskip\noindent \textbf{Case 1.}
$x\in\bigl[\frac{1+\delta}{\be},
\frac1{\be(\be-1)}-\frac{\delta}{\be}\bigr]$. It is easy to check, using (\ref{eq:delta}), that this is an interval of positive length. Here
$L_\be(x)\in[1+\delta,\frac1{\be-1}-\delta]$ and
$R_\be(x)\in[\delta,\frac1{\be-1}-\delta-1]$. In either case, the image is at a distance at least $\delta$ from either endpoint of $I_\be$.

It suffices to estimate the number of iterations one needs to reach the switch region $\De_\be$. In view of symmetry, we can deal with
$y\in[\delta,1/\be)$; here $L_\be^k(y)\in\De_\be$ for
some $1\leq k\le\lfloor\log_\be\frac1{\delta}\rfloor+1$.

\medskip\noindent
\textbf{Case 2.} $x\in\bigl(\frac1{\be},\frac{1+\delta}{\be}\bigr)$ or the mirror-symmetric case (which is analogous). Here $R_\be(x)$ can be very close to 0, so we have no control over its further iterations. Consequently, we remove this branch from $\T(x;\be)$ and concentrate on the subtree which grows from $L_\be(x)$.

We have $L_\be(x)=\be x\in(1,1+\delta)$. Clearly, it lies in $\De_\be$ provided $1+\delta\le \frac1{\be(\be-1)}$ -- which is equivalent to $\delta\le\frac{1+\be-\be^2}{\be(\be-1)}$, and this is true for $\be>\sqrt2$, in view of (\ref{eq:delta}); for $\be\le\sqrt2$ we have $\delta-1=\be\le\frac{1}{\be(\be-1)}$. Furthermore,
\begin{align*}
L_\be L_\be(x)&=\be^2 x \in (\be,(1+\delta)\be),\\
R_\be L_\be(x)&=\be^2 x-1 \in (\be-1,(1+\delta)\be-1).
\end{align*}
Notice that $\be^2 x\le\frac1{\be-1}-\delta$, because for $\be>\sqrt2$ we have
$\be(1+\delta)=\frac1{\be-1}-\delta$, and for $\be\le\sqrt2$ we have $\be^2\le\frac1{\be-1}-\be+1$, which is in fact equivalent to $\be\le\sqrt2$. As for $R_\be L_\be(x)$, it is clear that it lies in $\bigl(\delta,\frac1{\be-1}-\delta\bigr)$ in either case, since $\delta\le\be-1$.

\begin{figure}
\centering \scalebox{1.0} {\includegraphics{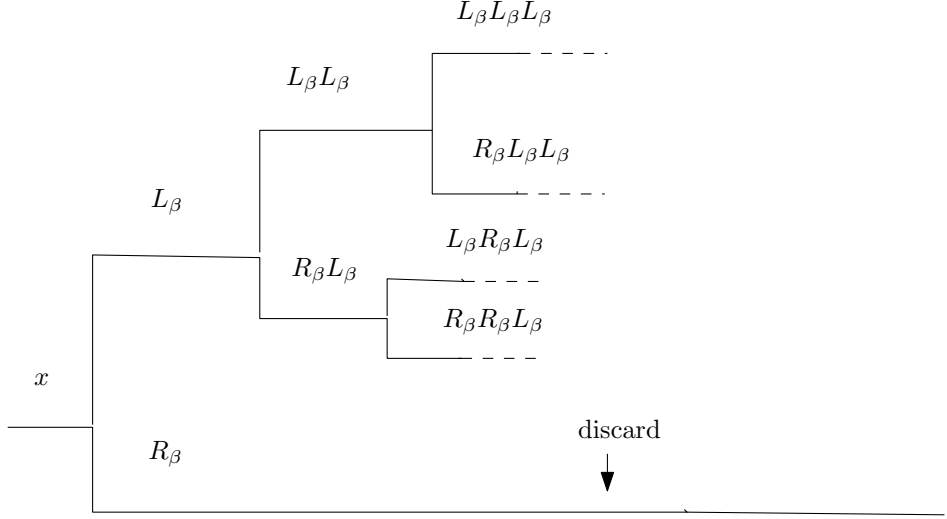}}
\caption{Branching for Case~2}\label{fig:branch1}
\end{figure}

We see that the length of each branch of the new tree does not
exceed $2(\lfloor\log_\be \frac1{\delta}\rfloor+1)$ (the factor two appears in the estimate because it may happen that we will have to discard $L_\be L_\be L_\be/R_\be L_\be L_\be$ or $L_\be R_\be L_\be/R_\be R_\be L_\be$), and it suffices to apply
Lemma~\ref{lem:estimate}.
\end{proof}

If $0<x<1/\be$, then there exists a unique $\ell\ge1$ such that $L_\be^{\ell-1}(x)<1/\be$ but $L_\be^{\ell}(x)\ge1/\be$. In view of $1<\frac1{\be(\be-1)}$, this implies $L_\be^{\ell}(x)\in\De_\be$. Similarly, if $\frac1{\be(\be-1)}<x<\frac1{\be-1}$, then $R_\be^{\ell}(x)\in\De_\be$ for some $\ell$. Thus, it takes only a finite number of iterations for any $x\in\bigl(0,\frac1{\be-1}\bigr)$ to reach $\Delta_\be$. Hence follows Theorem~\ref{thm-1.5}.

%\begin{cor}
%If $1<\be<\frac{1+\sqrt5}2$, then
%\[
%\liminf_{n\to\infty}\frac{\log_2\mathcal N_n(x;\be)}n \ge %\kappa>0
%\]
%for any  $x\in\bigl(0,\frac1{\be-1}\bigr)$.
%\end{cor}

\begin{re}It would be interesting to determine the sharp analog of the golden ratio in Theorem~\ref{thm-1.5} for the case $\be\in(m-1,m)$ with $m\ge3$.
\end{re}

To prove Corollary~\ref{cor-1.5}, we apply the inequalities (\ref{eq:ineq1}) and (\ref{eq:Nnl}), whence
\[
\mu\left(x-\frac{\be^{-n}}{\be-1}, x+\frac{\be^{-n}}{\be-1}\right)\ge \frac12\cdot 2^{(1-\kappa)n},
\]
and
\[
\limsup_{n\to\infty} -\frac1n\log_\be\mu\left(x-\frac{\be^{-n}}{\be-1}, x+\frac{\be^{-n}}{\be-1}\right)\le(1-\kappa)\log_\be 2.
\]
Hence follows the claim of Corollary~\ref{cor-1.5}.

We define the {\em minimal growth exponent} as follows:
\[
\mathfrak m_\beta=\inf_{x\in (0,1/(\be-1))} \liminf_{n\to\infty}\sqrt[n]{\mathcal N_n(x;\be)}.
\]

\begin{cor}\label{cor:minge}
For $\be<\frac{1+\sqrt5}2$ we have $\mathfrak m_\beta\ge 2^{\kappa}>1$, where $\kappa$ is given by (\ref{eq:ga}).
\end{cor}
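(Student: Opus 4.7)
The plan is to observe that this corollary is nothing more than a reformulation of Theorem~\ref{thm-1.5} at the exponential scale, combined with the elementary fact that $\kappa$, as defined by formula~(\ref{eq:ga}), is strictly positive.

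First, since $\be<\frac{1+\sqrt{5}}{2}<2$, the hypothesis $m>\be$ is satisfied for $m=2$, so Theorem~\ref{thm-1.5} applies with $m=2$: for every $x\in\bigl(0,\tfrac{1}{\be-1}\bigr)$,
\[
\liminf_{n\to\infty}\frac{\log_2 \mathcal{N}_n(x;\be)}{n}\ \ge\ \kappa.
\]
Next, because $t\mapsto 2^t$ is continuous and strictly increasing, I would apply it to both sides while using that $\liminf$ commutes with continuous monotone functions on the relevant range, obtaining
\[
\liminf_{n\to\infty}\sqrt[n]{\mathcal{N}_n(x;\be)}\ =\ 2^{\liminf_{n\to\infty}\frac{\log_2 \mathcal{N}_n(x;\be)}{n}}\ \ge\ 2^\kappa.
\]
Taking the infimum over all $x\in\bigl(0,\tfrac{1}{\be-1}\bigr)$ yields $\mathfrak{m}_\be\ge 2^\kappa$.

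Finally, I would verify that $\kappa>0$ directly from (\ref{eq:ga}): in either branch $\kappa$ equals $\tfrac{1}{2}$ divided by a positive integer (namely $\lfloor\log_\be\tfrac{\be^2-1}{1+\be-\be^2}\rfloor+1$ or $\lfloor\log_\be\tfrac{1}{\be-1}\rfloor+1$), both of which are finite and positive since $1<\be<\frac{1+\sqrt5}{2}$ forces the arguments of the logarithms to be strictly greater than $1$. Hence $2^\kappa>1$, completing the proof. There is no real obstacle here; the entire content of the corollary is already encoded in Theorem~\ref{thm-1.5}.
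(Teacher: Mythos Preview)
Your proposal is correct and matches the paper's approach: the corollary is stated without a separate proof because it is immediate from Theorem~\ref{thm-1.5} (with the explicit $\kappa$ of Theorem~\ref{thm:lessgr}), exactly as you spell out. Your verification that $\kappa>0$ is also fine, though note this is already asserted in the statement of Theorem~\ref{thm-1.5}.
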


%\begin{re} As is easy to see, $2^\kappa\to1$ as %$\be\searrow1$, although one would expect $\mathfrak %m_\be\to2$ as $\be\searrow1$. It would be interesting to %obtain a better lower bound for $\mathfrak m_\be$ for $\be$ %close to 1 to prove this claim.
%\end{re}

The golden ratio in Theorem~\ref{thm:lessgr} and Corollary~\ref{cor:minge} is a sharp constant in a boring sense, since for $\be>\frac{1+\sqrt5}2$ there are always $x$ with a unique $\be$-expansion (see \cite{GS}) and for $\be=\frac{1+\sqrt5}2$ there are $x$ with a linear growth of $\mathcal N_n(x)$ (see, e.g., \cite{SV}). Hence $\mathfrak m_\be=1$ for $\be\ge\frac{1+\sqrt5}2$.

However, it is also a sharp bound in a more interesting sense; let us call the set of $\be$-expansions of a given $x$ {\em sparse} if $\lim_{n\to \infty} \frac1n \log \mathcal N_n(x;\be)=0$.

\begin{pro} \label{prop:cont}
For $\be=\frac{1+\sqrt5}2$ there exists a continuum of points $x$, each of which has a sparse continuum of $\be$-expansions.
\end{pro}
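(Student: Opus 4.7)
The plan is to produce an uncountable family of points $\{x_\tau\}_{\tau\in\{0,1\}^\BN}$ whose canonical $\be$-expansions have $1$'s only at a sparse, $\tau$-dependent set of positions. Fix $\be=(1+\sqrt5)/2$, so that $\be^{2}=\be+1$ and hence $\be^{-n}=\be^{-n-1}+\be^{-n-2}$ for every $n\ge1$. For $\tau=(\tau_k)_{k\ge1}\in\{0,1\}^\BN$ set
$$
N_k(\tau)=10k^{2}+\tau_k,\qquad x_\tau=\sum_{k=1}^{\infty}\be^{-N_k(\tau)}\in\Bigl(0,\tfrac{1}{\be-1}\Bigr).
$$
If $\tau\neq\tau'$ and $K$ is the smallest index at which they differ, then, using $1-\be^{-1}=\be^{-2}$, the $K$-th term dominates: $|x_\tau-x_{\tau'}|\ge \be^{-10K^{2}-2}-O(\be^{-10(K+1)^{2}})>0$ because $10(K+1)^{2}\ge 10K^{2}+30$. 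Hence $\tau\mapsto x_\tau$ is injective and $\{x_\tau\}$ has cardinality~$2^{\aleph_0}$.

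The sequence $\e^{\tau}=(\e_n^\tau)_{n\ge 1}$ defined by $\e_n^\tau=1$ iff $n\in\{N_k(\tau):k\ge1\}$ is a $\be$-expansion of $x_\tau$. Since the gaps satisfy $N_{k+1}(\tau)-N_k(\tau)\ge 20k+9$, the pattern $100$ at position $N_k(\tau)$ is isolated, and the identity $\be^{-n}=\be^{-n-1}+\be^{-n-2}$ lets us replace it by $011$ independently of all other indices. Making a binary choice at each $k$ yields $2^{\aleph_0}$ distinct $\be$-expansions of $x_\tau$; in particular, writing $K(n)=\max\{k:N_k(\tau)\le n\}\asymp\sqrt n$, we obtain $\mathcal N_n(x_\tau;\be)\ge 2^{K(n)-1}\to\infty$, so each $x_\tau$ has a continuum of $\be$-expansions.

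It remains to prove $\frac{1}{n}\log\mathcal N_n(x_\tau;\be)\to 0$. I would show that every $\be$-expansion of $x_\tau$ arises from $\e^{\tau}$ by choosing, at each $N_k(\tau)$, a number $r_k\in\{0,1,\ldots,R_k\}$ of successive local replacements $100\to 011$, with $R_k=\lfloor(N_{k+1}(\tau)-N_k(\tau)-1)/2\rfloor=O(k)$. This classification is the main obstacle and is where the golden-ratio hypothesis is essential: for $\be=(1+\sqrt5)/2$ the kernel of the evaluation map $(a_n)\mapsto\sum a_n\be^{-n}$ on $\{-1,0,1\}^\BN$ is generated, as a $\BZ$-module, by translates of the single identity $\be^{-n}-\be^{-n-1}-\be^{-n-2}=0$, and the sparsity of $\e^{\tau}$ forces any admissible $\{-1,0,1\}$-combination to split into non-interacting ``replacement bubbles'' localized around the positions $N_k(\tau)$. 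Granting this,
$$
\mathcal N_n(x_\tau;\be)\le\prod_{k=1}^{K(n)}(R_k+1)\le C^{K(n)}\,K(n)!,
$$
so $\log\mathcal N_n(x_\tau;\be)=O(\sqrt n\,\log n)=o(n)$, as required.
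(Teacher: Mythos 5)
Your construction is essentially parallel to the paper's: both take points whose canonical expansion has $1$'s separated by growing gaps and both count expansions block-by-block, getting a polynomial number of choices per block and hence subexponential growth. The lower-bound half of your argument (injectivity of $\tau\mapsto x_\tau$, the independent $100\to 011$ replacements giving $2^{\aleph_0}$ expansions and $\mathcal N_n\ge 2^{K(n)-1}$) is correct and complete. But the upper bound --- the actual \emph{sparseness}, which is the whole content of the proposition --- is exactly the step you flag with ``Granting this,'' and as written it is a genuine gap, not a routine verification. Your proposed justification is moreover false as stated: the kernel of $(a_n)\mapsto\sum_n a_n\be^{-n}$ on $\{-1,0,1\}^{\BN}$ is \emph{not} generated as a $\Z$-module by translates of $\be^{-n}-\be^{-n-1}-\be^{-n-2}=0$, because infinitely supported relations exist that are not finite combinations of these generators; for instance $\be^{-1}=\sum_{k\ge 1}\be^{-2k}$ (this is the limit of infinitely many iterated replacements, and it is precisely why a run of replacements can, in principle, eat all the way through a gap). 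So ruling out interaction between adjacent ``bubbles'' requires a real combinatorial argument: after $\lfloor g/2\rfloor$ replacements inside a block with gap $g$, the zero-run is exhausted and the configuration abuts the next $1$, and one must check no further moves ($100\leftrightarrow 011$ in either direction) couple the blocks. There is also a smaller unaddressed point: $\mathcal N_n$ counts prefixes extendable to full expansions, so a prefix can end mid-bubble and the boundary block must be controlled too (harmless, but part of the classification you assume).

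The paper closes exactly this gap by citation rather than by a new argument: it chooses the canonical expansion $10^{2m_1}10^{2m_2}\cdots$ with $m_k\nearrow\infty$ and invokes Lemmas~2.1 and~2.2 of Sidorov--Vershik \cite{SV}, which prove that the set of all $\be$-expansions of such a point is \emph{exactly} the Cartesian product $\mathfrak X_{m_1}\times\mathfrak X_{m_2}\times\cdots$ with $\#\mathfrak X_{m_k}=m_k$ --- i.e., precisely the ``non-interacting bubbles'' classification you need, established for the even-gap structure. With that product formula, $\log\mathcal N_n(x;\be)/n\sim \bigl(\sum_{j\le k}\log m_j\bigr)/\bigl(2\sum_{j\le k}m_j+1\bigr)\to 0$, and the continuum of points comes from choosing $m_k\in\{2k-1,2k\}$, just as your $\tau_k$ perturbation does. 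To repair your write-up with minimal change, either prove the bubble-classification lemma directly for your gap sequence (taking all gaps even would let you reduce to the \cite{SV} statement verbatim), or simply cite \cite{SV} as the paper does; the kernel-of-the-evaluation-map heuristic should be dropped, since it is both unproven and, in the stated $\Z$-module form, untrue.
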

\begin{proof} Suppose $(m_k)_{k=1}^\infty$ is a strictly increasing sequence of natural numbers. Let $x$ be the number whose $\be$-expansion is $10^{2m_1}10^{2m_2}10^{2m_3}\dots$ We claim that such an $x$ has a required property.

Indeed, as was shown in \cite{SV}, the set of all $\be$-expansions in this case is the Cartesian product $\mathfrak X_{m_1}\times \mathfrak X_{m_2}\times\dots$, where
\[
\mathfrak X_{m_k}=\left\{(\e_1,\dots,\e_{2m_k+1}) : \sum_{j=1}^{2m_k+1}\e_j\be^{-j}=\frac1{\be}\right\}.
\]
It follows from \cite[Lemma~2.1]{SV} that $\#\mathfrak X_{m_k}=m_k$, whence by \cite[Lemma~2.2]{SV},
\[
\#\mathcal D_\be(10^{2m_1}\dots 10^{2m_k})=\prod_{j=1}^k m_j,
\]
where $\mathcal D_\be(\cdot)$ is given by
\[
\mathcal D_\be(\e_1,\dots,\e_n)=\left\{(\e_1',\dots,\e_n')\in\{0,1\}^n : \sum_{k=1}^n \e_k\be^{-k}=\sum_{k=1}^n \e'_k\be^{-k}\right\}.
\]
Hence for $n=\sum_1^k (2m_j+1)$,
\[
\frac{\log\mathcal N_n(x;\be)}n \sim \frac{\sum_{j=1}^k \log m_j}{2\sum_1^k m_j+1}\to0, \quad k\to+\infty,
\]
since $m_k\nearrow +\infty$. Therefore, $\lim_n\sqrt[n]{\mathcal N_n(x;\be)}= 1$. It suffices to observe that there exists a continuum strictly increasing sequences of natural numbers -- for instance, one can always choose $m_k\in\{2k-1,2k\}$.
\end{proof}
A similar proof works for the multinacci $\be$. It is an open question whether given $\be>\frac{1+\sqrt5}2$, it is always possible to find $x$ with a sparse continuum of $\be$-expansions.

{\bf Acknowledgement.} The first author was partially supported by RGC grants (projects 400706 and 401008) in CUHK. The authors are indebted to Kevin Hare for his useful remarks regarding Section~\ref{sec:gr}.

\end{document}